 %

\documentclass[twoside]{amsart}
\usepackage[english]{babel}

\topmargin 0in
\oddsidemargin 0in
\evensidemargin 0in
\textwidth 125mm
\textheight 195mm
\parskip .1in
\usepackage{graphicx}
\usepackage{mathtools}		
\usepackage{mathabx}		
\usepackage{indentfirst}	
\usepackage{amsmath}
\usepackage{amssymb} 
\usepackage{amsthm}
\usepackage{thmtools}
\usepackage{enumitem}		
\usepackage[backref=page,colorlinks=true]{hyperref}	
\usepackage[capitalise]{cleveref} 

\usepackage{tikz}
\usepackage[font=footnotesize]{caption}





\newcommand{\Hy}{\mathbb{H}}
\newcommand{\C}{\mathbb{C}}
\newcommand{\R}{\mathbb{R}}
\renewcommand{\epsilon}{\varepsilon}

\newcommand{\groprod}[3]{\left<#1|#2\right>_{#3}}


\newcommand{\corchete}[1]{\left\{{#1}\right\}}
\newcommand{\ov}[1]{\overline{#1}}

\newcommand{\SL}[2]{\mathrm{SL}_{#1}(#2)}

\newcommand{\CAT}[1]{\textup{CAT}(#1)}


\DeclareMathOperator{\Ima}{Im}

\DeclareMathOperator{\arccosh}{arccosh}

\usepackage{color}
\usepackage[colorlinks=true]{hyperref}
\usepackage{fancyhdr}
\usepackage[left=4.5cm,top=3.5cm,right=4.5cm,bottom=2.5cm]{geometry}

\renewcommand*{\backref}[1]{}
\renewcommand*{\backrefalt}[4]{\quad \tiny 
  \ifcase #1 (\textbf{NOT CITED.})%
  \or    (Cited on page~#2.)%
  \else   (Cited on pages~#2.)%
  \fi}

\makeatletter

\def\MRbibitem{\@ifnextchar[\my@lbibitem\my@bibitem}

\def\mybiblabel#1#2{\@biblabel{{\hyperref{http://www.ams.org/mathscinet-getitem?mr=#1}{}{}{#2}}}}

\def\myhyperanchor#1{\Hy@raisedlink{\hyper@anchorstart{cite.#1}\hyper@anchorend}}

\def\my@lbibitem[#1]#2#3#4\par{%
  \item[\mybiblabel{#2}{#1}\myhyperanchor{#3}\hfill]#4%
  \@ifundefined{ifbackrefparscan}{}{\BR@backref{#3}}%
  \if@filesw{\let\protect\noexpand\immediate
    \write\@auxout{\string\bibcite{#3}{#1}}}\fi\ignorespaces%
}

\def\my@bibitem#1#2#3\par{%
  \refstepcounter\@listctr
  \item[\mybiblabel{#1}{\the\value\@listctr}\myhyperanchor{#2}\hfill]#3%
  \@ifundefined{ifbackrefparscan}{}{\BR@backref{#2}}%
  \if@filesw\immediate\write\@auxout
    {\string\bibcite{#2}{\the\value\@listctr}}\fi\ignorespaces%
}

\makeatother

\fancyhead[LE,RO]{\thepage}
\fancyhead[LO,RE]{}
\fancyhead[CE]{\scalebox{0.8}[0.8]{EDUARDO OREG\'ON-REYES}}
\fancyhead[CO]{\hspace{-2mm}\scalebox{0.8}[0.8]{THE AVALANCHE PRINCIPLE AND NEGATIVE CURVATURE}}
\fancyfoot[C]{}

\pagestyle{fancy}

\newtheorem{teo}{Theorem}[section]
\newtheorem{prop}[teo]{Proposition}
\newtheorem{lema}[teo]{Lemma}
\newtheorem{coro}[teo]{Corollary}
\theoremstyle{remark}
\theoremstyle{definition}
\newtheorem{ej}[teo]{Example}
\newtheorem{defi}[teo]{Definition}

\begin{document}

\title{\textbf{The Avalanche Principle and Negative Curvature}}

\author{\small{Eduardo Oreg\'on-Reyes}}

\markboth{E Oreg\'on Reyes}{Nombreartículo}

\date{}
\maketitle

\begin{abstract}
We use the geometric structure of the hyperbolic upper half plane to provide a new proof of the \emph{Avalanche Principle} introduced by M. Goldstein and W. Schlag in the context of $\SL{2}{\R}$ matrices. This approach allows to interpret and extend this result to arbitrary $\CAT{-1}$ metric spaces. Through the proof, we deduce a polygonal Schur theorem for these spaces.
\end{abstract}



\section{Introduction}

Lyapunov exponents play a major role in the theory of dynamical systems, codifying the asymptotic behavior of a sequence of composition of linear maps. In particular, the top Lyapunov exponent describes the evolution of the norms of matrix products. For matrices taking values in $\SL{2}{\R}$, M. Goldstein and W. Schlag introduced the \emph{Avalanche Principle} (AP) \cite{gs}, which is a quantitative sufficient condition for the operator norm $\|A_{N}\cdots A_{1}\|$ to being similar to the product $\|A_{N}\|\cdots\|A_{1}\|$. Since then, several higher dimensional versions and refinements have appeared in the literature, being an important tool to prove continuity of Lyapunov exponents for linear cocycles coming from Schr\"odinger operators (see e.g. \cite{bour, bourji, duaklqp, dk18, sch}). The following is the version for $\SL{2}{\R}$ due to Duarte and Klein \cite[Thm.~4.1]{duakl}:
\begin{teo}[AP in $\SL{2}{\R}$]\label{APM} There exist constants $c_0,c_1>0$ so that if $0 <\epsilon <1$ and $0<\kappa\leq c_0\epsilon^{2}$, then for every chain of matrices $A_1,\dots,A_{n} \in \SL{2}{\R}$ satisfying
\begin{equation*}
    \|A_{j}\|\geq \kappa^{-2} \hspace{2mm}\text{ for }1\leq j\leq n, \hspace{4mm} \text{and} \hspace{4mm}
    \displaystyle\frac{\|A_{j}A_{j-1}\|}{\|A_{j}\|\|A_{j-1}\|}\geq \epsilon \hspace{2mm}\text{ for } 2\leq j\leq n,
\end{equation*}
we have
\begin{equation}
\left| \log {\|A_{n}\cdots A_{1}\|} +\sum_{i=2}^{n-1}{\log{\|A_{i}\|}}-\sum_{i=2}^{n}{\log{\|A_{i}A_{i-1}\|}}\right|\leq c_1\frac{n\kappa}{\epsilon^{2}}. \label{concAP}
\end{equation}
\end{teo}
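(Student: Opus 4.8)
The plan is to re‑read \cref{APM} as a statement about polygonal paths in the hyperbolic plane, to isolate from it a geometric ``polygonal Schur theorem'', and to check that the geometric proof uses only properties shared by all $\CAT{-1}$ spaces. The bridge is the isometric action of $\SL{2}{\R}$ on the upper half‑plane $\Hy$: fixing $o=i$, one has $2\cosh d_{\Hy}(o,A\cdot o)=\|A\|^{2}+\|A\|^{-2}$ (the squared Hilbert--Schmidt norm of $A$), and hence $d_{\Hy}(o,A\cdot o)=2\log\|A\|$. Given a chain $A_{1},\dots,A_{n}$, set $p_{j}:=(A_{n}\cdots A_{j+1})\cdot o$ for $0\le j\le n$, so $p_{n}=o$ and $p_{0}=(A_{n}\cdots A_{1})\cdot o$. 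Applying the isometry $(A_{n}\cdots A_{j+1})^{-1}$ gives $d(p_{j-1},p_{j})=2\log\|A_{j}\|$, $d(p_{j-2},p_{j})=2\log\|A_{j}A_{j-1}\|$ and $d(p_{0},p_{n})=2\log\|A_{n}\cdots A_{1}\|$. Thus the hypotheses of \cref{APM} say exactly that every edge of the polygon $p_{0}p_{1}\cdots p_{n}$ is long, $d(p_{j-1},p_{j})\ge 4\log(1/\kappa)=:L$, while every interior Gromov product is small, $\groprod{p_{j-2}}{p_{j}}{p_{j-1}}\le\log(1/\epsilon)=:\delta$; and \eqref{concAP} becomes, up to the factor $2$,
\[
\Bigl|\,d(p_{0},p_{n})+\sum_{i=2}^{n-1}d(p_{i-1},p_{i})-\sum_{i=2}^{n}d(p_{i-2},p_{i})\,\Bigr|\ \le\ 2c_{1}\,\frac{n\kappa}{\epsilon^{2}}.
\]

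So I would prove the following in an arbitrary $\CAT{-1}$ space: if $p_{0},\dots,p_{n}$ has all edges of length $\ge L$ and all interior Gromov products $g_{i}:=\groprod{p_{i-2}}{p_{i}}{p_{i-1}}$ at most $\delta$, then
\[
d(p_{0},p_{n})=\sum_{i=1}^{n}d(p_{i-1},p_{i})-2\sum_{i=2}^{n}g_{i}+E,\qquad |E|\le C\,n\,e^{-L}e^{2\delta}.
\]
Substituting $2g_{i}=d(p_{i-2},p_{i-1})+d(p_{i-1},p_{i})-d(p_{i-2},p_{i})$ one checks that $\sum_{i=1}^{n}d(p_{i-1},p_{i})-2\sum_{i=2}^{n}g_{i}=-\sum_{i=2}^{n-1}d(p_{i-1},p_{i})+\sum_{i=2}^{n}d(p_{i-2},p_{i})$, so $E$ is precisely the signed sum appearing in \eqref{concAP}; with $L=4\log(1/\kappa)$, $\delta=\log(1/\epsilon)$ and $\kappa\le c_{0}\epsilon^{2}$ one has $C\,e^{-L}e^{2\delta}=C\,\kappa^{4}/\epsilon^{2}\le 2c_{1}\,\kappa/\epsilon^{2}$, giving the displayed bound. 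In words, the geodesic $[p_{0},p_{n}]$ is shorter than the perimeter $\sum_{i}d(p_{i-1},p_{i})$ by essentially twice the total Gromov ``deficit'' $\sum_{i}g_{i}$ at the interior vertices --- the negatively curved polygonal analogue of Schur's comparison theorem, as announced in the abstract.

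The proof is by induction on $n$, building up the geodesic $[p_{0},p_{k}]$ one vertex at a time and carrying along the extra information that $\groprod{p_{0}}{p_{k}}{p_{k-1}}$ lies within $O(e^{-L}e^{2\delta})$ of $g_{k}$ (so in particular $\le\delta+1$). For $k=2$ both assertions are identities, since $\groprod{p_{0}}{p_{2}}{p_{1}}=g_{2}$ by definition. For the passage from $k$ to $k+1$, the definition of the Gromov product in the triangle $p_{0}p_{k}p_{k+1}$ gives $d(p_{0},p_{k+1})=d(p_{0},p_{k})+d(p_{k},p_{k+1})-2\groprod{p_{0}}{p_{k+1}}{p_{k}}$, so it suffices to show that $\groprod{p_{0}}{p_{k+1}}{p_{k}}$ is close to $g_{k+1}=\groprod{p_{k-1}}{p_{k+1}}{p_{k}}$. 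The decisive input, available in every $\CAT{-1}$ space (it is the heart of Bourdon's construction of the visual metric), is that $\vismet{w}{x}{y}=e^{-\groprod{x}{y}{w}}$ satisfies the triangle inequality; applying it with $w=p_{k}$, $x=p_{0}$, $y=p_{k-1}$, $z=p_{k+1}$ yields
\[
\bigl|\,e^{-\groprod{p_{0}}{p_{k+1}}{p_{k}}}-e^{-g_{k+1}}\,\bigr|\ \le\ e^{-\groprod{p_{0}}{p_{k-1}}{p_{k}}}.
\]
Now $\groprod{p_{0}}{p_{k-1}}{p_{k}}=d(p_{k-1},p_{k})-\groprod{p_{0}}{p_{k}}{p_{k-1}}$ --- a purely algebraic rearrangement of the definitions --- and by the inductive hypothesis this is $\ge L-\delta-1$, which (after choosing $c_{0}$ small enough that $L$ dominates $\delta$) is large. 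Hence the right‑hand side above is at most $e^{-L+\delta}$; dividing through by $e^{-g_{k+1}}\ge\epsilon$ and using $|t|\le 2|e^{t}-1|$ for small $t$ gives $\groprod{p_{0}}{p_{k+1}}{p_{k}}=g_{k+1}+O(e^{-L}e^{2\delta})$. Substituting back into the triangle identity advances the running estimate for $d(p_{0},\cdot)$ with an extra error of the same size, and summing over the $n$ steps proves the theorem.

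The main obstacle --- and the reason the argument genuinely lives in $\CAT{-1}$ rather than in $\Hy$ alone --- is the exact triangle inequality for $\vismet{w}{\cdot}{\cdot}$: in $\Hy$ one could simply compute with coordinates, but in a general $\CAT{-1}$ space this inequality must be extracted from comparison triangles, and it is exactly the mechanism converting ``the polygon turns only slightly along each long edge'' into an error that is summable. The more quantitative obstacle is to calibrate $L=4\log(1/\kappa)$ against $\delta=\log(1/\epsilon)$: one needs $\kappa\le c_{0}\epsilon^{2}$ with $c_{0}$ small enough that the auxiliary product $\groprod{p_{0}}{p_{k-1}}{p_{k}}\ge L-\delta-1$ stays comfortably large at \emph{every} stage, which is what keeps the per‑vertex errors additive --- so the final bound remains linear in $n$ instead of compounding --- and what produces the factor $e^{2\delta}=1/\epsilon^{2}$ in the estimate.
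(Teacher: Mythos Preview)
Your argument is correct and is genuinely different from the paper's. You and the paper share the opening move --- translating \cref{APM} into a statement about the tension of the polygon $p_0,\dots,p_n$ in $\Hy^2$ via $d(p_{j-1},p_j)=2\log\|A_j\|$ --- but from there the routes diverge completely. The paper never argues inductively: it first reduces an arbitrary good chain to a \emph{convex} comparison chain in $\Hy^2$ (this is \cref{catcom}, proved by a somewhat delicate angle-deformation argument, \cref{lemachoro} and \cref{cato}), then pushes each convex chain onto a curve of constant geodesic curvature $\cos\varphi$ (\cref{goodpos}), shows the tension of such distorted chains is monotone in $\varphi$ (\cref{varcte}), and finally computes the limit $\varphi\to 0$ explicitly to obtain the sharp bound $|\tau|\le (n-2)\frac{2}{\lambda-1}$. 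Your proof replaces all of this machinery with a single inductive step driven by the triangle inequality for $e^{-\groprod{\cdot}{\cdot}{w}}$, together with the elementary identity $\groprod{p_0}{p_{k-1}}{p_k}=d(p_{k-1},p_k)-\groprod{p_0}{p_k}{p_{k-1}}$.

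What each approach buys: yours is shorter, works verbatim in any $\CAT{-1}$ space without ever passing through a comparison chain, and visibly generalizes (with a multiplicative loss) to Gromov-hyperbolic spaces. The paper's route costs more but yields more: it produces the explicit constant $\frac{2}{\lambda-1}$ with $\lambda$ determined by \eqref{parameters} (your constant is an unspecified $C$), it covers good pairs $(a,b)$ with $a$ arbitrarily small (your argument needs $L$ large), and along the way it proves the polygonal Schur comparison $d(x_0,x_n)\ge d(\ov{x}_0,\ov{x}_n)$ of \cref{catcom}, which is of independent interest and which your method does not see. One bibliographic remark: the exact triangle inequality for $e^{-\groprod{\cdot}{\cdot}{w}}$ on \emph{interior} points of a $\CAT{-1}$ space is a bit more than Bourdon's boundary statement; it is the assertion that $\CAT{-1}$ spaces are \emph{strongly hyperbolic} in the sense of Nica--\v{S}pakula, and you should cite it as such (for $\Hy^2$ alone, of course, it can be checked by hand).
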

Here $\|A\|$ denotes the Euclidean operator norm (largest singular value) of the matrix $A$.

Note that the hypothesis only depends on the norms of the matrices $A_1,\dots,A_{n}$ and $A_2A_1,\dots,A_{n} A_{n-1}$, so we can think of the AP as a \emph{local to global} principle for norms of matrix products. In fact, these hypotheses imply \emph{uniform hyperbolicity} of infinite sequences of $2\times 2$ matrices \cite{zh}. 

\subsection{A version for the hyperbolic plane} The assumptions and conclusions of the previous theorem have natural interpretations in terms of hyperbolic geometry. Consider the upper half plane $\Hy^{2}=\corchete{z \in \C \colon \Ima{z}>0}$ endowed with the Riemannian metric $ds^{2}=dz^{2}/\Ima(z)^{2}$. The induced distance $d$ on $\Hy^{2}$ takes the form \cite[p.~130]{bear}
\begin{equation}
d(z_1,z_2)=  \arccosh \left(1+\frac{ |z_1-z_2|^2 }{ 2 \textup{Im}(z_1)\textup{Im}(z_1) } \right)=2\log{\left(\frac{|z_1-z_2|+|z_1-\overline{z_2}|}{2\sqrt{\textup{Im}(z_1)\textup{Im}(z_1)}}\right)}
.\label{disthip}
\end{equation}
We also have the natural isometric action of $\SL{2}{\R}$ on $\Hy^2$ by fractional linear transformations: 
\begin{equation}
A=\begin{pmatrix}
a & b \\
c & d \\
\end{pmatrix}
\mapsto \tilde{A}z=\
\displaystyle\frac{az+b}{cz+d}. \notag
\end{equation}
The relation between the operator norm $\|A\|$ of $A$ and its action $\tilde{A}$ on $\Hy^2$ is given by the formula \cite[Prop.~2.1]{yogro}
\begin{equation*}
d(\tilde{A}\mathbf{i},\mathbf{i})=2 \log{\|A\|}.
\end{equation*}
So, if we define $x_0=\mathbf{i}$ and $x_{j}=\tilde{A}_{n}\cdots  \tilde{A}_{n-j+1}\mathbf{i}$ for $1\leq j\leq n$, then the left hand side of \eqref{concAP} translates to the following definition:
\begin{defi}The \emph{tension} of a chain of points $x_0,\dots,x_n\in \Hy^{2}$ is the number:
\begin{equation*} \label{rift}
\tau(x_0,\dots,x_n)=\sum_{j=1}^{n-1}{d(x_{j-1},x_{j+1})}-\sum_{j=2}^{n-1}{d(x_{j-1},x_{j})}-d(x_0,x_n).
\end{equation*}
\end{defi}
\begin{ej}
If $x_0,\dots,x_n$ lie in a hyperbolic geodesic in $\Hy^{2}$ in that order, then $\tau(x_0,\dots,x_n)=0$.
\end{ej}

\begin{ej}\label{circulo}
Let $n\geq 4$, and consider a regular hyperbolic $n$-gon $x_1,\dots,x_n \in \Hy^2$ inscribed in a hyperbolic circle of radius $r$. Defining $x_0=x_n$, the tension of the chain $x_0,x_1,\dots,x_n$ is $$\tau(x_0,\dots,x_n)=(n-1)d(x_0,x_2)-(n-2)d(x_0,x_1)\geq d(x_0,x_2).$$
Thus $\frac{\tau(x_0,\dots,x_{n})}{n}$ tends to infinity when $r$ tends to infinity, and there are chains with arbitrarily large tension when compared to their lengths.
\end{ej}
\begin{figure}[hbt]
\begin{tikzpicture}[scale=0.4]
\draw (2.21338,2.7755) node{\small{$\bullet$}}; 
\draw (-0.78994,3.46099) node{\small{$\bullet$}};
\draw (-3.19843,1.54028) node{\small{$\bullet$}};
\draw (-3.19843,-1.54028) node{\small{$\bullet$}};
\draw (-0.78994,-3.46099) node{\small{$\bullet$}}; 
\draw (2.21338,-2.7755) node{\small{$\bullet$}};
\draw (3.55,0) node{\small{$\bullet$}}; 
\draw (2.21338,2.7755) arc (155.28435:256.14421:1.998177)  ;
\draw (-0.78994,3.46099) arc (206.712921429
:307.572781429:1.998177)  ;
\draw (-3.19843,1.54028) arc (258.141492858:359.001352858
:1.998177)  ;
\draw (-3.19843,-1.54028) arc (309.570064287:410.429924287:1.998177)  ;
\draw (-0.78994,-3.46099) arc (360.998635716:461.858495716:1.998177)  ;
\draw (2.21338,-2.7755) arc (412.427207145:513.287067145:1.998177)  ;
\draw (3.55,0) arc (463.855778574
:564.715638574:1.998177)  ;
\draw[dashed] (0,0) circle (3.55) ;
\end{tikzpicture}
	\caption{A hyperbolic heptagon with large tension.}
\end{figure}
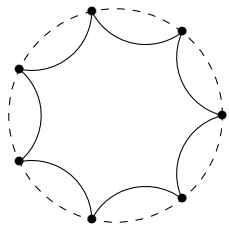

As the previous examples suggest, for $\frac{|\tau(x_0,\dots,x_{n})|}{n}$ to be small we need some kind of control on the chain $x_0,x_1,\dots,x_n$, making it close to lie in a geodesic. In $\Hy^2$, a sufficient condition for this is that the points $x_0,\dots,x_n$ lie in that order in a curve of constant geodesic curvature less than 1 w.r.t.~the hyperbolic metric (for a detailed explanation, see \cite[Sec.~2.3]{hub}). By Example \ref{circulo} this condition is in some sense necessary, since contrary to what happens in Euclidean geometry, a curve in $\Hy^2$ of constant geodesic curvature $k$ is closed (i.e.~is a hyperbolic circle) if and only if $k>1$ \cite[Exe.~2.3.7]{hub}. 
 We control the chains in $\Hy^2$ according to the following definition:   



\begin{defi}\label{defigood}
A pair $(a,b)\in \R^{2}$ is \emph{good} if $a,b\geq 0$ and 
\begin{equation}\label{goodp}
\sinh(a-b)>2\sinh(a/2).
\end{equation}
For such a pair, we say that a chain $x_0,x_1,\dots,x_n$ of points in $\Hy^2$ is $(a,b)$-\emph{good} if
\begin{equation}\label{goodc}
d(x_{j+1},x_j)\geq a \hspace{1.5mm}\text{ for }0\leq j\leq n-1,\hspace{1.8mm}\text{and}\hspace{1.8mm} \groprod{x_{j-1}}{x_{j+1}}{x_j}\leq b \hspace{1.5mm}\text{ for }1\leq j\leq n-1,
\end{equation}
where $\groprod{x}{y}{z}:=\frac{d(x,z)+d(z,y)-d(x,y)}{2}$ is the \emph{Gromov product}.
\end{defi}
\hspace{-4.4mm}Condition \eqref{goodp} is natural, since for an orientation-preserving isometry $f$ of $\Hy^2$ with
\begin{equation}\label{cchain}
 d(f\mathbf{i},\mathbf{i})=a, \hspace{2mm}\text{ and }\groprod{f^{2}\mathbf{i}}{\mathbf{i}}{f\mathbf{i}}=b,
\end{equation}
the chain of points $\mathbf{i},f\mathbf{i},f^2\mathbf{i},\dots$ lies in a curve of constant geodesic curvature less than 1 if and only if \eqref{goodp} holds. In that case the \emph{stable length} $d^{\infty}(f)=\inf_{n\geq1 }{\frac{d(f^{n}\mathbf{i},\mathbf{i})}{n}}$ of $f$ is positive and satisfies \cite[Cor.~3]{yomt}: 
\begin{equation*}
\sinh(a-b)=2\sinh(a/2)\cosh(d^{\infty}(f)/2).
\end{equation*}
Gromov product is also natural. We have $\groprod{x}{y}{z}\geq0 $, with equality if and only if $x,y,z$ lie in a geodesic with $z$ between $x$ and $y$. Moreover, when $d(x,z)$ and $d(z,y)$ are large, $\groprod{x}{y}{z}$ is essentially a function of the angle determined by $x,y,z$ with vertex at $z$, w.r.t.~the hyperbolic metric. So when $a$ is large, condition \eqref{goodc} may be regarded as an \emph{angular bound}.

Let $x_0,\dots,x_n \in \Hy^{2}$ be a chain of the form $x_j=f^{j}\mathbf{i}$ for some orientation-preserving isometry $f$ of $\Hy^{2}$ satisfying \eqref{cchain}. A chain of this form is called $(a,b)$-\emph{canonical}. Note that two $(a,b)$-canonical chains of the same length are isometric, so we refer to any of these as \emph{the} $(a,b)$-canonical chain. 
In what follows, two chains $x_0,\dots,x_n$ and $y_0,\dots,y_n$ are considered the same if $y_j=fx_j$ for some isometry $f$ of $\Hy^2$. The heuristic is that among all chains which are $(a,b)$-good, those that are the farthest from a geodesic are the $(a,b)$-canonical ones.

For a good pair $(a,b)$, consider the numbers $\lambda>1$ and $0<\varphi\leq \pi/2$ given by 
\begin{equation}
\cosh(\log(\lambda)/2)=\frac{\sinh(a-b)}{2\sinh(a/2)}, \hspace{3mm}
\csc(\varphi)\sinh(\log(\lambda)/2)=\sinh(a/2). \label{parameters}
\end{equation}
For these quantities the chain $x_0,\dots,x_n$ given by $x_j=\lambda^{j}e^{\mathbf{i}\varphi}$ is $(a,b)$-canonical (see Corollary \ref{saco} below). Also, note that $\lambda=e^{d^{\infty}(f)}$ for $fz=\lambda z$, and that the curve $t\mapsto te^{\mathbf{i}\varphi}$ has constant geodesic curvature equal to $\cos(\varphi)<1$. We call the numbers $\lambda$ and $\varphi$ the \emph{translation number} and \emph{curvature angle} of the pair $(a,b)$, respectively, and we say that a chain $x_0,x_1,\dots,x_n$ is $\varphi$-\emph{good} if it is $(a,b)$-good of a good pair $(a,b)$ with curvature angle $\varphi$.
\begin{figure}[hbt]
\begin{tikzpicture}[scale=0.95]
\draw[dashed] (0,0) -- (9.028125,2.1515625) ;
\draw (-1.5,0) -- (9.128125,0);
\draw (2.14,0.51) node[below] {\small{$x_{0}$}}; 
\draw (2.14,0.51) node{\small{$\bullet$}};
\draw (3.21,0.765) node[below] {\small{$x_{1}$}};
\draw (3.21,0.765) node{\small{$\bullet$}};
\draw (4.815,1.1475) node[below] {\small{$x_{2}$}}; 
\draw (4.815,1.1475) node{\small{$\bullet$}};
\draw (7.2225,1.72125) node[below] {\small{$x_{3}$}}; 
\draw (7.2225,1.72125) node{\small{$\bullet$}};
\draw (1.8,0) arc (0:13.40456:1.8)  ;
\draw (3.21,0.765) arc (63.0046607:143.4084:0.855552153)  ;
\draw (4.815,1.1475) arc (63.0046607:143.4084:1.2833282)  ;
\draw (7.2225,1.72125) arc (63.0046607:143.4084:1.9249923)  ;
\draw (1.55,0.19) node {\small{$\varphi$}};
\end{tikzpicture}
	\caption{A canonical chain with $\lambda=1.5$, $\varphi=13.404^{\circ}$.}
\end{figure}
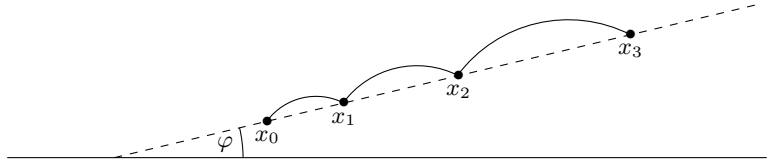

We have enough notation for stating our first main result, the AP in the upper half plane:
\begin{teo}[Hyperbolic Avalanche Principle]\label{AP}Let $(a,b)$ be a good pair, and let $x_0,x_1,\dots,x_n\in \Hy^{2}$ be an $(a,b)$-good chain. Then
\begin{equation*}\label{hap}
\left|\tau(x_0,\dots,x_n) \right|\leq (n-2)\frac{2}{\lambda-1}, 
\end{equation*}
where $\lambda$ is the translation number of $(a,b)$.
\end{teo}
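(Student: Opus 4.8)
The plan is to turn the tension into a telescoping sum of differences of Gromov products and then to estimate each term by a geometric series of ratio $\lambda^{-1}$. The first step is an identity valid for any chain: writing every long edge as $d(x_{j-1},x_{j+1}) = d(x_{j-1},x_j) + d(x_j,x_{j+1}) - 2\groprod{x_{j-1}}{x_{j+1}}{x_j}$, and telescoping the distances $d(x_0,x_j)$ by means of $d(x_0,x_{j+1}) = d(x_0,x_j) + d(x_j,x_{j+1}) - 2\groprod{x_0}{x_{j+1}}{x_j}$, one finds, after the edge lengths cancel out, that
\[
\tau(x_0,\dots,x_n) \;=\; 2\sum_{j=2}^{n-1}\Big(\groprod{x_0}{x_{j+1}}{x_j} \;-\; \groprod{x_{j-1}}{x_{j+1}}{x_j}\Big).
\]
The $j=1$ contribution $\groprod{x_0}{x_2}{x_1} - \groprod{x_0}{x_2}{x_1}$ vanishes identically, which is the source of the factor $n-2$ rather than $n-1$. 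It therefore suffices to show $\bigl|\groprod{x_0}{x_{j+1}}{x_j} - \groprod{x_{j-1}}{x_{j+1}}{x_j}\bigr| \le \tfrac1{\lambda-1}$ for every $2\le j\le n-1$.

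For this I would telescope once more, moving the first argument back one index at a time,
\[
\groprod{x_0}{x_{j+1}}{x_j} - \groprod{x_{j-1}}{x_{j+1}}{x_j} \;=\; \sum_{i=1}^{j-1}\Big(\groprod{x_{i-1}}{x_{j+1}}{x_j} - \groprod{x_i}{x_{j+1}}{x_j}\Big),
\]
and prove that the $i$-th term is at most $\lambda^{-(j-i)}$ in absolute value; then $\sum_{m\ge1}\lambda^{-m} = \tfrac1{\lambda-1}$ finishes the argument. To bound a single term I would pass to the visual metric $\rho(y,z):=e^{-\groprod{y}{z}{x_j}}$ based at $x_j$, which satisfies the triangle inequality since $\Hy^2$ is $\CAT{-1}$; applying it to the triple $x_{i-1},x_i,x_{j+1}$ bounds the term, up to a factor tending to $1$, by $\rho(x_{i-1},x_i)/\rho(x_i,x_{j+1}) = e^{-(\groprod{x_{i-1}}{x_i}{x_j}-\groprod{x_i}{x_{j+1}}{x_j})}$. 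So everything reduces to the inequality $\groprod{x_{i-1}}{x_i}{x_j} - \groprod{x_i}{x_{j+1}}{x_j} \ge (j-i)\log\lambda$. The forward term $\groprod{x_i}{x_{j+1}}{x_j}$ remains within a bounded distance of $b$ as $i$ decreases, since $x_i$ and $x_{j+1}$ lie on opposite sides of $x_j$ along the chain and the defining bound $\groprod{x_{k-1}}{x_{k+1}}{x_k}\le b$ can be propagated; the real content is the lower bound on $\groprod{x_{i-1}}{x_i}{x_j}$, namely that it grows at least like $(j-i)\log\lambda$ — consecutive points deep in the chain are exponentially $\rho$-close as seen from $x_j$, with rate exactly $\lambda$.

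This lower bound is the heart of the proof and, I expect, the main obstacle; it is essentially the polygonal Schur theorem announced in the abstract — that an $(a,b)$-good chain escapes at least as fast, and turns at least as gently, as the $(a,b)$-canonical chain. I would establish it by proving the canonical chain extremal: using the hyperbolic law of cosines in the triangles $x_{i-1}x_ix_j$ together with the constraints $d(x_{i-1},x_i)\ge a$ and $\groprod{x_{i-1}}{x_{i+1}}{x_i}\le b$ to obtain a monotone recursion for the angle at $x_i$ toward the far endpoint $x_0$, whose stable value is the curvature angle $\varphi$, and then reading off the rate $\log\lambda = \trl(f)$ from the relations \eqref{parameters} and $\sinh(a-b)=2\sinh(a/2)\cosh(\trl(f)/2)$; along the canonical chain itself the relevant Gromov products can be computed directly from \eqref{disthip}. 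The two-sided estimate comes from running the same comparison on the chain read in reverse order, which interchanges the two roles symmetrically. The subtle point is that, because $\sum_{m\ge1}\lambda^{-m}$ equals $\tfrac1{\lambda-1}$ exactly — and the bound of the theorem is already attained asymptotically by the canonical chains when $a$ and $b$ are large — there is essentially no slack: the estimates above must be used in sharp form, and it is precisely there that the exact trigonometric relations among $a$, $b$, $\lambda$ and $\varphi$ are needed and the bookkeeping becomes intricate.
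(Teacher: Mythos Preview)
Your telescoping identity
\[
\tau(x_0,\dots,x_n)=2\sum_{j=2}^{n-1}\bigl(\groprod{x_0}{x_{j+1}}{x_j}-\groprod{x_{j-1}}{x_{j+1}}{x_j}\bigr)
\]
is correct and is a genuinely different starting point from the paper's. The paper never writes $\tau$ this way; instead it runs a chain of monotonicity reductions --- first from an arbitrary good chain to its convex comparison chain in $\Hy^2$ (Theorem~\ref{catcom}), then from a convex good chain to its $\varphi$-distorted chain on a curve of constant geodesic curvature (Proposition~\ref{goodpos}), then by letting $\varphi\to 0$ to a chain on a horocycle (Proposition~\ref{varcte}) --- and finishes with an explicit closed-form computation of the horocyclic tension, which is bounded by $2\sum_{j=2}^{n-1}\log\bigl(1+\tfrac1{\lambda_j-1}\bigr)\le 2\sum_{j=2}^{n-1}\tfrac1{\lambda_j-1}$. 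Each step is a comparison with an extremal configuration, and the exact constant $2/(\lambda-1)$ falls out of the last arithmetic inequality.

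Your route, by contrast, has a real gap at the point you yourself flag. First, the growth estimate $\groprod{x_{i-1}}{x_i}{x_j}-\groprod{x_i}{x_{j+1}}{x_j}\ge(j-i)\log\lambda$ is precisely the content of the theorem --- it is equivalent to a linear lower bound on $d(x_i,x_j)$ with slope $\log\lambda$ --- and you have only announced it, not proved it; the recursive angle argument you sketch is not carried out. Second, even granting that estimate, the visual-metric step does not close. From $|e^{-A}-e^{-B}|\le e^{-C}$ you get $|A-B|\le -\log(1-e^{B-C})$, not $|A-B|\le e^{B-C}$, and summing $-\log(1-\lambda^{-m})$ over $m\ge1$ gives $-\log\prod_{m\ge1}(1-\lambda^{-m})$, which strictly exceeds $\sum_{m\ge1}\lambda^{-m}=\tfrac1{\lambda-1}$. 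So the method as written produces a constant larger than $2/(\lambda-1)$ and cannot yield the theorem as stated; you noticed there is ``essentially no slack'' but did not resolve it. Finally, the side claim that $\groprod{x_i}{x_{j+1}}{x_j}$ ``remains within a bounded distance of $b$'' is not justified and already requires something like the Schur-type comparison you are trying to avoid. The paper's reduction-to-extremal strategy sidesteps all of this: the only inequality applied at the end is $\log(1+x)\le x$, which points in the right direction.
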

The conclusion of the AP is then that the quantity $\frac{|\tau(x_0,\dots,x_n)|}
{n}$ is small when the chain $x_0, x_1,\dots,x_n$ is close to lie in a geodesic in the sense of Definition \ref{defigood}, and it implies Theorem \ref{APM}. Indeed, the inequality \cite[Thm.~1.1]{yogro} $$d^{\infty}(f)\geq d(f^{2}\mathbf{i},\mathbf{i})-d(f\mathbf{i},\mathbf{i})-2\log{2}$$ holds for any orientation-preserving isometry $f$ of $\Hy^2$, and for $f$ satisfying \eqref{cchain} it turns out to be equivalent to $\lambda \geq \frac{1}{4}e^{a-2b}$. So, if $c>1$ and $a-2b> \log(4)+\log\left(\frac{c}{c-1}\right)>\log(4)$, then $(a,b)$ is a good pair and  
\begin{equation*}
\left|\tau(x_0,\dots,x_n) \right|\leq (n-2)\frac{2}{\lambda-1} \leq 8c(n-2)e^{2b-a}, 
\end{equation*}
and we recover \eqref{concAP} with $\kappa=e^{-a}$, $\epsilon=e^{-b}$, $c_1=4c>4$ and $c_0=\frac{c-1}{4c}$.
Condition \eqref{goodp} is more flexible than Duarte-Klein hypotheses for AP, since it also includes chains $x_0,\dots,x_n$ with $d(x_j,x_{j-1})$ arbitrarily close to $0$. This allow us to conclude results of continuous nature, as we see below. 

\subsection{AP for $\CAT{-1}$ spaces and Schur Theorem} The notions of tension and Gromov product are valid for arbitrary metric spaces, so the question is for which of them an Avalanche Principle holds. Natural candidates are \emph{$\CAT{-1}$ metric spaces}, whose local and global geometry are more negatively curved than the geometry of $\Hy^2$. These are metric spaces whose geodesic triangles are \emph{thinner} than the respective geodesic triangles in $\Hy^2$ (see Section \ref{sAPCAT} for a detailed definition). Examples of such spaces include metric trees, and complete simply connected Riemannian manifolds with sectional curvature bounded above by $-1$ with the induced Riemannian distance \cite[Ch.~II, Thm.~IA.6]{bri}. 

Similarly to the definition given for $\Hy^{2}$, a chain $x_0,\dots,x_n$ of points in a metric space $X$ is good if there is a good pair $(a,b)$ such that the points $x_0,\dots,x_n$ satisfy \eqref{goodc} with the corresponding distance on $X$. For such chains there is an essentially unique convex comparison chain $\ov{x}_0,\dots,\ov{x}_n\in \Hy^{2}$ such that $d(x_j,x_{j-1})=d(\ov{x}_j,\ov{x}_{j-1})$ for $1\leq j\leq n$ and $\groprod{x_{j-1}}{x_{j+1}}{x_j}=\groprod{\ov{x}_{j-1}}{\ov{x}_{j+1}}{\ov{x}_j}$ for $1\leq j\leq n-1$ (see Definitions \ref{condef} and \ref{compdef}). 

Once we know that AP holds for convex chains in $\Hy^2$, the Avalanche Principle for $\textup{CAT}(-1)$ spaces follows from the next theorem:
\begin{teo}\label{catcom}Let $X$ be a $\textup{CAT}(-1)$ space, and consider a good chain $x_0,x_1,\dots,x_n$ in $X$ with respective comparison chain $\overline{x}_0,\overline{x}_1,\dots,\overline{x}_n$ in $\Hy^{2}$. Then 
\begin{equation*}
|\tau(x_0,x_1,\dots,x_n)|\leq \tau(\overline{x}_0,\overline{x}_1,\dots,\overline{x}_n).
\end{equation*}
\end{teo}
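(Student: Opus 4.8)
The plan is to extract a \emph{tension identity} that reduces the claim to two Schur‑type inequalities, and then to prove those by transporting the problem to $\Hy^{2}$ via Reshetnyak's majorization theorem. First I would note that, by the defining properties of the comparison chain, $d(\ov{x}_{j-1},\ov{x}_{j})=d(x_{j-1},x_{j})$ and $\groprod{x_{j-1}}{x_{j+1}}{x_{j}}=\groprod{\ov{x}_{j-1}}{\ov{x}_{j+1}}{\ov{x}_{j}}$, so the identity $d(p,r)=d(p,q)+d(q,r)-2\groprod{p}{r}{q}$ forces $d(\ov{x}_{j-1},\ov{x}_{j+1})=d(x_{j-1},x_{j+1})$ for all $1\le j\le n-1$. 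Hence every term in the definition of $\tau$ coincides for the two chains, with the single exception of $d(x_{0},x_{n})$, and therefore
\begin{equation*}
\tau(x_{0},\dots,x_{n})-\tau(\ov{x}_{0},\dots,\ov{x}_{n})=d(\ov{x}_{0},\ov{x}_{n})-d(x_{0},x_{n}).
\end{equation*}
So it suffices to prove that (A) $d(x_{0},x_{n})\ge d(\ov{x}_{0},\ov{x}_{n})$ and (B) $\tau(x_{0},\dots,x_{n})\ge 0$: indeed (A) gives $\tau(x_{0},\dots,x_{n})\le\tau(\ov{x}_{0},\dots,\ov{x}_{n})$, and together with (B) this yields $|\tau(x_{0},\dots,x_{n})|=\tau(x_{0},\dots,x_{n})\le\tau(\ov{x}_{0},\dots,\ov{x}_{n})$.

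For (A) I would apply Reshetnyak's majorization theorem \cite{bri} to the closed curve formed by the broken geodesic $[x_{0},x_{1}]\cup\dots\cup[x_{n-1},x_{n}]$ closed up with a geodesic $[x_{n},x_{0}]$: this produces a convex region $K\subseteq\Hy^{2}$ and a $1$-Lipschitz map $\phi\colon K\to X$ whose restriction to $\partial K$ is an arclength‑preserving parametrization of that curve. Let $y_{0},\dots,y_{n}\in\partial K$ be the points with $\phi(y_{j})=x_{j}$; since geodesic sub‑arcs of the curve correspond to geodesic sub‑arcs of $\partial K$, the chain $y_{0},\dots,y_{n}$ is convex in $\Hy^{2}$ with $d(y_{j-1},y_{j})=d(x_{j-1},x_{j})$ and $d(y_{0},y_{n})=d(x_{0},x_{n})$, while $1$-Lipschitzness of $\phi$ gives $d(y_{j-1},y_{j+1})\ge d(x_{j-1},x_{j+1})$ and hence $\groprod{y_{j-1}}{y_{j+1}}{y_{j}}\le\groprod{x_{j-1}}{x_{j+1}}{x_{j}}=\groprod{\ov{x}_{j-1}}{\ov{x}_{j+1}}{\ov{x}_{j}}$. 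Then (A) follows from a monotonicity statement internal to $\Hy^{2}$ --- a \emph{polygonal Schur theorem} --- to the effect that among convex chains in $\Hy^{2}$ with prescribed consecutive distances the endpoint distance is non‑increasing in the interior Gromov products; comparing $y_{0},\dots,y_{n}$ with $\ov{x}_{0},\dots,\ov{x}_{n}$, which have equal edge lengths and for which $y$ has the smaller Gromov products, gives $d(x_{0},x_{n})=d(y_{0},y_{n})\ge d(\ov{x}_{0},\ov{x}_{n})$.

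For (B) I would induct on $n$, the base case $n=2$ being trivial since then $\tau(x_{0},x_{1},x_{2})=0$, via the splitting
\begin{equation*}
\tau(x_{0},\dots,x_{n})=\tau(x_{1},\dots,x_{n})+2\bigl(\groprod{x_{0}}{x_{n}}{x_{1}}-\groprod{x_{0}}{x_{2}}{x_{1}}\bigr),
\end{equation*}
so that the inductive step reduces to the monotonicity of the Gromov product along the chain, $\groprod{x_{0}}{x_{2}}{x_{1}}\le\groprod{x_{0}}{x_{n}}{x_{1}}$; equivalently, (B) is a polygonal Schur theorem for the $\CAT{-1}$ space $X$, stating that the tension of a good chain is non‑negative. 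The main obstacle is exactly this family of Schur comparisons for convex chains, and it is here that the good‑chain hypothesis is essential: by \eqref{goodp} the convex comparison chain lies on a curve of geodesic curvature strictly less than $1$ --- it does not curl up, in contrast with the circular polygons of Example~\ref{circulo} --- which is what lets an inductive \emph{unfolding} argument close. Concretely, I expect to collapse a terminal triple, invoke at each stage Alexandrov's lemma for two comparison triangles glued along a common edge of $\Hy^{2}$, and use the hyperbolic law of cosines to track how the endpoint distance moves as a vertex angle is opened; making this quantitatively sharp is the delicate point.
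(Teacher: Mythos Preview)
Your tension identity is correct and clarifying: since the comparison chain matches all consecutive distances and all Gromov products, every term in $\tau$ agrees except $d(x_0,x_n)$, and so the whole theorem is equivalent to the two-sided bound $d(\ov{x}_0,\ov{x}_n)\le d(x_0,x_n)\le 2\,d(\ov{x}_0,\ov{x}_n)-(\text{rest})$. Your inequality (A) is exactly the upper half, and your plan for it---majorize by a convex chain in $\Hy^2$ and then compare convex chains via an angle-opening lemma---is sound; in fact the angle-opening step is the content of Lemma~\ref{lemachoro}, so this part would go through.

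The genuine gap is (B). Your claim that $\tau(x_0,\dots,x_n)\ge 0$ for every good chain in a $\CAT{-1}$ space is \emph{false}, already in $\Hy^2$. Take $a=4$ and the ``zigzag'' chain with $d(x_{j-1},x_j)=a$, $\angle_{x_1}(x_0,x_2)=\angle_{x_2}(x_1,x_3)=\pi-\theta$ for $\theta=\pi/3$, and with $x_0,x_3$ on \emph{opposite} sides of the geodesic through $x_1,x_2$. One computes $\groprod{x_0}{x_2}{x_1}=\groprod{x_1}{x_3}{x_2}\approx 0.144$, so the chain is $(4,0.15)$-good, while $d(x_0,x_2)\approx 7.713$, $d(x_0,x_3)\approx 11.438$, and hence $\tau\approx 2(7.713)-4-11.438\approx -0.012<0$. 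This is precisely the reflected configuration appearing in Corollary~\ref{cororef}: that corollary only guarantees $\tau(\text{convex})+\tau(\text{zigzag})\ge 0$, and the zigzag summand can indeed be negative. Consequently your inductive reduction to $\groprod{x_0}{x_2}{x_1}\le\groprod{x_0}{x_n}{x_1}$ cannot succeed in general.

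What survives is the correct target for the lower bound: you need $\tau(x_0,\dots,x_n)\ge -\tau(\ov{x}_0,\dots,\ov{x}_n)$, not $\ge 0$. The paper obtains this by first settling $n=3$ (Proposition~\ref{cato}), where the lower bound is won by comparing the $\CAT{-1}$ configuration with the \emph{reflected} comparison chain $\ov{x}_0,\ov{x}_1,\ov{x}_2,\ov{y}_3$ and then invoking Corollary~\ref{cororef}; a second case is needed when that reflected quadrilateral fails to be convex. The general $n$ then follows by the decomposition $\tau(x_0,\dots,x_n)=\tau(x_0,x_1,x_2,x_3)+\tau(x_0,x_2,x_3,\dots,x_n)$ together with Lemma~\ref{lemachoro}. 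Your Reshetnyak idea could likely replace some of the angle-chasing in the \emph{upper} bound, but for the \emph{lower} bound you must abandon (B) and aim at $-\tau(\ov{x})$ instead.
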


\begin{coro}[$\CAT{-1}$ AP]\label{catapc}
Theorem \ref{AP} also holds for $\CAT{-1}$ spaces.
\end{coro}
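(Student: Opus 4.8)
The plan is to obtain Corollary \ref{catapc} by simply concatenating Theorem \ref{AP} with Theorem \ref{catcom}, the only point requiring care being that the comparison chain in $\Hy^2$ inherits the goodness of the original chain together with the same translation number.

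First I would fix a $\CAT{-1}$ space $X$, a good pair $(a,b)$, and an $(a,b)$-good chain $x_0,x_1,\dots,x_n$ in $X$. By the construction of the comparison chain (Definitions \ref{condef} and \ref{compdef}), there is a chain $\ov{x}_0,\ov{x}_1,\dots,\ov{x}_n$ in $\Hy^2$ with $d(\ov{x}_{j-1},\ov{x}_j)=d(x_{j-1},x_j)$ for $1\le j\le n$ and $\groprod{\ov{x}_{j-1}}{\ov{x}_{j+1}}{\ov{x}_j}=\groprod{x_{j-1}}{x_{j+1}}{x_j}$ for $1\le j\le n-1$. Since the conditions \eqref{goodc} defining $(a,b)$-goodness only involve the consecutive distances and the Gromov products at the interior vertices, the chain $\ov{x}_0,\dots,\ov{x}_n$ is again $(a,b)$-good in $\Hy^2$; in particular its translation number is the same $\lambda$ associated to the pair $(a,b)$.

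Next I would apply Theorem \ref{AP} to the chain $\ov{x}_0,\dots,\ov{x}_n$ in $\Hy^2$, obtaining $|\tau(\ov{x}_0,\dots,\ov{x}_n)|\le (n-2)\frac{2}{\lambda-1}$. Then Theorem \ref{catcom} gives $|\tau(x_0,\dots,x_n)|\le \tau(\ov{x}_0,\dots,\ov{x}_n)\le |\tau(\ov{x}_0,\dots,\ov{x}_n)|\le (n-2)\frac{2}{\lambda-1}$, which is precisely the conclusion of Theorem \ref{AP} for the chain in $X$.

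The only real obstacle is bookkeeping: one must make sure that a good chain in $X$ unwinds to an $(a,b)$-good chain for a single fixed good pair $(a,b)$, and that the resulting comparison chain is literally a chain in $\Hy^2$ to which Theorem \ref{AP} applies verbatim. There is no new geometric input beyond the two theorems already established.
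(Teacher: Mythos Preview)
Your argument is correct and is exactly the paper's intended derivation: feed Theorem \ref{catcom} into Theorem \ref{AP}. The only refinement worth making explicit is that the comparison chain $\ov{x}_0,\dots,\ov{x}_n$ is \emph{convex} by Definition \ref{compdef}, so when you invoke Theorem \ref{AP} you are really using only the convex case proved in Section \ref{APCC}; this matters because the non-convex case of Theorem \ref{AP} for $\Hy^2$ is itself a special instance of Corollary \ref{catapc}, and pointing to convexity removes any appearance of circularity.
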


The inequality $\tau(x_0,x_1,\dots,x_n)\leq \tau(\overline{x}_0,\overline{x}_1,\dots,\overline{x}_3)$ in Theorem \ref{catcom}, which is equivalent to $d(x_0,x_n)\leq d(\ov{x}_0,\ov{x}_n)$, was proved by C. Epstein in $\Hy^{3}$ \cite{eps} by an elaborate argument, and by A. Granados when $X$ is a complete simply connected Riemannian manifold with sectional curvature bounded above by $-1$ \cite{gr}, under similar assumptions. Both authors used this inequality to prove extensions of the classical Schur comparison theorem for plane curves \cite[p.~36]{chern}:
\begin{teo}[Extended Schur]\label{schur}
Let $X$ be a complete simply connected Riemannian manifold with sectional curvature bounded above by $-1$. Suppose that $f$ is a curve in $X$ with length $L$ and $g$ is a simple curve in $\Hy^2$ with the same length $L$ that together with its chord bounds a convex region of the upper half plane. Suppose that the geodesic curvatures satisfy
$k_f(s) \leq k_g(s)$ , where $s$ is the common arc-length parameter. Then the length of the chord of $f$ is greater than or equal to the length of the chord of $g$.
\end{teo}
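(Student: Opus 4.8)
The plan is to pass to the polygonal setting. Discretizing $f$ and $g$ by inscribed polygons reduces Theorem~\ref{schur} to a comparison for chains, which in turn follows from Theorem~\ref{catcom} combined with the classical arm-lemma (polygonal Schur) in $\Hy^2$: if $x_0,\dots,x_n$ is a good chain in $X$ with comparison chain $\overline{x}$ in $\Hy^2$, then $d(x_0,x_n)\geq d(\overline{x}_0,\overline{x}_n)$ by the inequality $\tau(x_0,\dots,x_n)\leq\tau(\overline{x}_0,\dots,\overline{x}_n)$ of Theorem~\ref{catcom} (the intermediate distances of the two chains agree, so their tensions differ exactly by $d(\overline{x}_0,\overline{x}_n)-d(x_0,x_n)$); and if $\overline{x}$ has the same edge lengths as a convex chain $y_0,\dots,y_n$ in $\Hy^2$ but no larger Gromov products at its interior vertices — hence is ``more open'' than $y$ at every vertex — then $d(\overline{x}_0,\overline{x}_n)\geq d(y_0,y_n)$ by Cauchy's arm lemma in constant curvature $-1$. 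So for such chains $d(x_0,x_n)\geq d(y_0,y_n)$, and it remains to realize the data of $f$ and $g$ in this way as the partition is refined, following the classical proof of Schur's theorem.

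Concretely, I would first perturb $f$: for small $\epsilon>0$ take a unit-speed curve $f_\epsilon$ in $X$ on $[0,L]$ with $f_\epsilon(0)=f(0)$, $f_\epsilon'(0)=f'(0)$, and geodesic curvature $k_{f_\epsilon}=k_f-\epsilon$ (it exists by the ODE for prescribed curvature, and $f_\epsilon\to f$, so the chord $d(f_\epsilon(0),f_\epsilon(L))\to d(f(0),f(L))$ as $\epsilon\to0$); now $k_{f_\epsilon}<k_g$ strictly. Fix $\epsilon$, take a partition $0=s_0<\dots<s_n=L$ of small mesh $\delta$, and put $y_j=g(s_j)$; as $g$ bounds a convex region with its chord, $(y_j)$ is convex. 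Choose a partition $0=\sigma_0<\dots<\sigma_n\leq L$ of $f_\epsilon$ with $d(f_\epsilon(\sigma_{j-1}),f_\epsilon(\sigma_j))=d(g(s_{j-1}),g(s_j))$ for all $j$; this is possible with each $\sigma_j-\sigma_{j-1}$ slightly smaller than $s_j-s_{j-1}$, because the chord of an arc of length $h$ equals $h-\tfrac1{24}k^2h^3+o(h^3)$ and $|k_{f_\epsilon}|\leq k_g$ (here one also uses, as in the classical statement, that $k_f\geq -k_g$), and then $\sigma_n\to L$ as $\delta\to0$. With $x_j=f_\epsilon(\sigma_j)$, a uniform second-order expansion — the exterior angle of an inscribed polygon at an interior vertex $\gamma(s_j)$ being $k_\gamma(s_j)\cdot\tfrac12\big((s_j-s_{j-1})+(s_{j+1}-s_j)\big)+O(\delta^2)$ along any $C^2$ curve — shows that for $\delta$ small (depending on $\epsilon$) the exterior angles of $(x_j)$ are strictly smaller than those of $(y_j)$, hence, as the edge lengths agree, so are the Gromov products. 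The same expansion (Gromov products at interior vertices being of order $(\text{exterior angle})^2\times(\text{edge length})=O(\delta^3)$, against edge lengths of order $\delta$) shows that $(x_j)$ is a good chain in the sense of Definition~\ref{defigood}, provided the geodesic curvatures stay below $1$.

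With all of this, Theorem~\ref{catcom} gives $d(x_0,x_n)\geq d(\widehat{x}_0,\widehat{x}_n)$ for the comparison chain $\widehat{x}$ of $(x_j)$ in $\Hy^2$, and the arm-lemma applied to $\widehat{x}$ and $(y_j)$ (equal edge lengths, $\widehat{x}$ more open) gives $d(\widehat{x}_0,\widehat{x}_n)\geq d(y_0,y_n)$; since $d(y_0,y_n)$ is the chord of $g$ and $d(x_0,x_n)=d(f_\epsilon(0),f_\epsilon(\sigma_n))$, letting $\delta\to0$ (so $\sigma_n\to L$) shows the chord of $f_\epsilon$ is at least the chord of $g$, and letting $\epsilon\to0$ finishes the proof. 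The main obstacle is this discrete-to-continuous passage: one must make the asymptotics of chord lengths and of exterior angles uniform over the curves so a single mesh works at every vertex, check that neither the perturbation $f_\epsilon$ nor the rescaling of its arc length spoils the strict turning inequality, and verify the good-chain hypothesis of Theorem~\ref{catcom} — which is genuinely restrictive when $g$ (hence possibly $f$) has geodesic curvature close to or above $1$, a range in which one should instead reduce to the curvature $<1$ case by a further approximation or appeal to the Riemannian comparison theorems of Epstein and Granados directly. Everything geometric is then carried by Theorem~\ref{catcom} and the elementary arm-lemma.
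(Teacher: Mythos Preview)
Your approach matches the paper's: the paper does not give a detailed proof of Theorem~\ref{schur} but only remarks, after stating Theorem~\ref{catcom}, that the inequality $d(x_0,x_n)\geq d(\overline{x}_0,\overline{x}_n)$ (equivalent to one half of Theorem~\ref{catcom}) is precisely the polygonal Schur inequality of Epstein and Granados, whose discretization arguments then yield Theorem~\ref{schur} for curves with $k_f(s)\leq k_g(s)\leq 1$. Your sketch fills in exactly this discretization, and the ``arm lemma'' you invoke is already in the paper as Lemma~\ref{lemachoro} (iterated as in the proof of Proposition~\ref{goodpos}), so no outside reference is needed there; your observation that the good-chain hypothesis forces the restriction $k_g<1$ matches the paper's explicit caveat, and neither the paper nor your method handles $k_g\geq 1$.
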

Therefore, Theorem \ref{catcom} implies the previous versions for the case of good chains, and hence Schur theorem \ref{schur} for curves with geodesic curvatures $k_f(s) \leq k_g(s) \leq 1$. See also \cite[Thm~2.3.13]{hub} for a result of similar spirit.

\medskip

\hspace{-4.3mm}\textbf{Organization of the paper:}
Section \ref{prelimhip} presents the main properties of the hyperbolic plane $\Hy^2$ that we use throughout the paper. In Section \ref{concha} we reduce the study of convex chains to the ones contained in curves of constant geodesic curvature. We use this reduction in Section \ref{APCC} and prove Theorem \ref{AP} for the case of convex chains. Section \ref{sAPCAT} deals with the non convex case, as well as with Theorem \ref{catcom}.




\section{Preliminaries of Hyperbolic Geometry}\label{prelimhip}

We start with some basic properties of the upper half plane $\Hy^2$. This is a \emph{geodesic metric space} in the sense that every two points $x,y\in \Hy^2$ can be joined by an arc isometric to a closed interval of length $d(x,y)$. This arc is unique and is denoted by $xy$. For every three distinct points $x,y,z$ in $\Hy^2$, the Riemannian angle between the arcs $zx$ and $zy$ is denoted by $\angle_{z}(x,y)$. The relation between angles and distances is given by the \emph{hyperbolic laws of cosines and sines}:
\begin{prop}
For a geodesic triangle in $\Hy^2$  with sides $a$, $b$ and $c$ and opposite angles $\alpha$, $\beta$ and $\gamma$: 

\emph{Law of Cosines}:
\begin{equation}
\cosh(c)=\cosh(a)\cosh(b)-\sinh(a)\sinh(b)\cos(\gamma).  \tag{LC}   \label{lcos}
\end{equation}

\vspace{-1ex}
\emph{Law of Sines}:
\begin{equation}
\frac{\sin(\alpha)}{\sinh(a)} =\frac{\sin(\beta)}{\sinh(b)}=\frac{\sin(\gamma)}{\sinh(c)}. \tag{LS} \label{lsin} 
\end{equation}
\end{prop}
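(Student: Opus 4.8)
The plan is to prove both laws by one direct computation in the upper half plane, after putting the triangle in a convenient normal position. Since $\Isom^{+}(\Hy^2)=\PSL{2}{\R}$ acts transitively on $\Hy^2$ by isometries, and the stabilizer of $\mathbf{i}$ acts transitively on the directions at $\mathbf{i}$, I may assume the vertex of angle $\gamma$ is $\mathbf{i}$, with one of its two sides running up the positive imaginary axis. From \eqref{disthip} one has $d(\mathbf{i},e^{t}\mathbf{i})=|t|$, so the endpoint of that side is $P=e^{b}\mathbf{i}$. For the other side I use the elliptic isometry fixing $\mathbf{i}$ that rotates the tangent plane $T_{\mathbf{i}}\Hy^2\cong\C$ by the angle $\gamma$; it is induced by $r_{\gamma}=\left(\begin{smallmatrix}\cos(\gamma/2)&\sin(\gamma/2)\\ -\sin(\gamma/2)&\cos(\gamma/2)\end{smallmatrix}\right)\in\SL{2}{\R}$, since the derivative of the associated M\"obius map at $\mathbf{i}$ equals $\bigl(\cos(\gamma/2)-\sin(\gamma/2)\mathbf{i}\bigr)^{-2}=e^{\mathbf{i}\gamma}$. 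Hence the endpoint of the other side is $Q=\widetilde{r_{\gamma}}(e^{a}\mathbf{i})$, so $d(\mathbf{i},Q)=a$; rationalizing the fraction gives $\Ima(Q)=e^{a}/D$ and $\mathrm{Re}(Q)=\sin(\gamma/2)\cos(\gamma/2)\,(1-e^{2a})/D$, where $D=\cos^{2}(\gamma/2)+e^{2a}\sin^{2}(\gamma/2)$. With this normalization the side opposite $\gamma$ has length $c=d(P,Q)$.

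Next I substitute $P$ and $Q$ into the first expression in \eqref{disthip}, $\cosh c = 1+\frac{|P-Q|^{2}}{2\,\Ima(P)\,\Ima(Q)}$, and clear denominators. Writing $e^{\pm a}=\cosh a\pm\sinh a$, $e^{\pm b}=\cosh b\pm\sinh b$, and using $\cos\gamma=\cos^{2}(\gamma/2)-\sin^{2}(\gamma/2)$ together with $1+\cos\gamma=2\cos^{2}(\gamma/2)$ and $1-\cos\gamma=2\sin^{2}(\gamma/2)$, the resulting rational expression collapses to $\cosh a\cosh b-\sinh a\sinh b\cos\gamma$, which is \eqref{lcos}. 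Checking that all the extraneous monomials cancel once the common denominator $2e^{a+b}D$ is cleared is the one point that needs care; it is a finite manipulation with half-angle identities, and the degenerate cases $\gamma=\pi/2$ and $a=b$ can be used as a sanity check. I expect this algebraic collapse to be the main (though routine) obstacle.

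Finally, \eqref{lsin} follows from \eqref{lcos} by pure algebra. Solving \eqref{lcos} for $\cos\gamma$ and using $\sin^{2}\gamma=1-\cos^{2}\gamma$,
\[
\frac{\sin^{2}\gamma}{\sinh^{2}c}=\frac{\sinh^{2}a\,\sinh^{2}b-(\cosh a\cosh b-\cosh c)^{2}}{\sinh^{2}a\,\sinh^{2}b\,\sinh^{2}c}=\frac{1-\cosh^{2}a-\cosh^{2}b-\cosh^{2}c+2\cosh a\cosh b\cosh c}{\sinh^{2}a\,\sinh^{2}b\,\sinh^{2}c},
\]
where the second equality uses $\sinh^{2}=\cosh^{2}-1$. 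The right-hand side is symmetric in the three sides, so running the same computation at the vertices of angles $\alpha$ and $\beta$ gives $\frac{\sin^{2}\alpha}{\sinh^{2}a}=\frac{\sin^{2}\beta}{\sinh^{2}b}=\frac{\sin^{2}\gamma}{\sinh^{2}c}$; since the three angles lie in $(0,\pi)$ and the three sides are positive, taking positive square roots yields \eqref{lsin}. An alternative that avoids the computation behind \eqref{lcos} is to pass to the hyperboloid model of $\Hy^{2}$ inside $\R^{2,1}$, where $\cosh d(u,v)=-\langle u,v\rangle$ for the Lorentzian form: then \eqref{lcos} is immediate from the $3\times3$ Gram matrix of the three vertices and \eqref{lsin} drops out of its determinant, at the cost of setting up a second model of $\Hy^{2}$.
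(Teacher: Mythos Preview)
The paper does not prove this proposition; it is stated as a standard fact of hyperbolic trigonometry and used freely thereafter. Your argument is correct and supplies a proof the paper chose to omit: the normalization at $\mathbf{i}$ with one side along the imaginary axis, the rotation $r_{\gamma}\in\SL{2}{\R}$ acting as multiplication by $e^{\mathbf{i}\gamma}$ on $T_{\mathbf{i}}\Hy^{2}$, and the direct substitution into \eqref{disthip} are all sound, and your derivation of \eqref{lsin} from \eqref{lcos} via the symmetric expression $1-\cosh^{2}a-\cosh^{2}b-\cosh^{2}c+2\cosh a\cosh b\cosh c$ is the standard route. The only remark is stylistic: since the paper treats these laws as background and cites Beardon's text in the bibliography, a one-line reference would also suffice; but as a self-contained proof yours is fine.
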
 
A quadrilateral with vertices $x,y,z,w \in \Hy^2$ such that $\angle_{x}(w,y)=\angle_{y}(x,z)=\pi/2$ and $d(x,w)=d(y,z)$ is called a \emph{Saccheri quadrilateral}. As a consequence of the hyperbolic trigonometric laws we obtain:
\begin{coro}
If $x,y,z,w \in \Hy^2$ form a Saccheri quadrilateral with $\angle_{x}(w,y)=\angle_{y}(x,z)=\pi/2$, $d(x,w)=d(y,z)=a$, $d(x,y)=b$ and $d(z,w)=\ell$, then
\begin{equation*}
    \sinh{\left({\frac {\ell}{2}}\right)}=\cosh{(a)}\sinh {\left({\frac {b}{2}}\right)}. \label{sac}
\end{equation*}
\end{coro}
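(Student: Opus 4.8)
The plan is to cut the Saccheri quadrilateral along a diagonal and apply the hyperbolic trigonometric laws twice. Draw the diagonal $xz$ and put $h=d(x,z)$. In the triangle with vertices $x,y,z$ the angle at $y$ equals $\angle_{y}(x,z)=\pi/2$, so the Law of Cosines \eqref{lcos} collapses to the hyperbolic Pythagorean identity
\[
\cosh(h)=\cosh(a)\cosh(b).
\]
Let $\theta=\angle_{x}(y,z)$ denote the remaining angle of this triangle at $x$. Since the side $yz$ of length $a$ is opposite the right angle, the Law of Sines \eqref{lsin} gives $\sin(\theta)=\sinh(a)/\sinh(h)$.

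Next I would pass to the triangle with vertices $x,w,z$, whose sides have lengths $d(x,w)=a$, $d(x,z)=h$ and $d(z,w)=\ell$. Because a Saccheri quadrilateral in $\Hy^2$ is convex, the diagonal $xz$ runs through its interior, so the right angle at $x$ splits as $\angle_{x}(w,y)=\angle_{x}(w,z)+\angle_{x}(z,y)$; hence $\angle_{x}(w,z)=\pi/2-\theta$ and $\cos\angle_{x}(w,z)=\sin(\theta)$. Feeding this and the value of $\sin(\theta)$ from the first step into \eqref{lcos} for this second triangle yields
\[
\cosh(\ell)=\cosh(a)\cosh(h)-\sinh(a)\sinh(h)\sin(\theta)=\cosh(a)\cosh(h)-\sinh^{2}(a).
\]
Substituting $\cosh(h)=\cosh(a)\cosh(b)$ and using $\sinh^{2}(a)=\cosh^{2}(a)-1$ turns this into $\cosh(\ell)-1=\cosh^{2}(a)\bigl(\cosh(b)-1\bigr)$.

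Finally I would rewrite both sides with the half-angle identity $\cosh(t)-1=2\sinh^{2}(t/2)$, obtaining $\sinh^{2}(\ell/2)=\cosh^{2}(a)\sinh^{2}(b/2)$; since all three quantities are nonnegative, taking positive square roots gives the claimed formula $\sinh(\ell/2)=\cosh(a)\sinh(b/2)$. The only delicate point is the bookkeeping of the angle at $x$: one needs to know that the ray $xz$ lies between the rays $xw$ and $xy$ so that the two angles genuinely add, which is exactly the convexity of the Saccheri quadrilateral; everything else is a routine double use of \eqref{lcos} together with one application of \eqref{lsin}. An equally short alternative would be to drop the common perpendicular from the midpoint of $xy$ to the midpoint of $zw$, splitting the figure into two congruent Lambert quadrilaterals and reducing to right-triangle trigonometry in each half, but I expect the diagonal route to be marginally cleaner to write down.
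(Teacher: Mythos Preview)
Your proof is correct and is essentially the paper's own argument: cut the quadrilateral along a diagonal, use \eqref{lcos} on each half together with one application of \eqref{lsin}, then finish with the half-angle identity. The only cosmetic difference is that the paper chooses the diagonal $yw$ and splits the right angle at $y$, whereas you choose $xz$ and split at $x$; by the symmetry of the Saccheri quadrilateral these are the same computation.
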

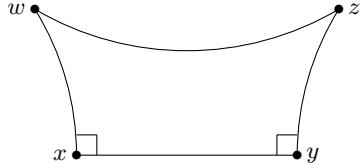
\begin{figure}[hbt]
\begin{tikzpicture}[scale=0.9]
\draw (1.63,0) node{\small{$\bullet$}}; 
\draw (-1.63,0) node{\small{$\bullet$}};
\draw (2.2452,2.15814) node{\small{$\bullet$}};
\draw (-2.2452,2.15814) node{\small{$\bullet$}};
\draw (1.63,0) node[right]{\small{$y$}}; 
\draw (-1.63,0) node[left]{\small{$x$}};
\draw (2.2452,2.15814) node[right]{\small{$z$}};
\draw (-2.2452,2.15814) node[left]{\small{$w$}};
\draw (1.63,0) -- (-1.63,0)  ;
\draw (2.2452,2.15814) arc (148.17819:180:4.0929)  ;
\draw (-1.63,0) arc (0:31.82181:4.0929)  ;
\draw (-2.2452,2.15814) arc (239.395:300.604:4.41)  ;
\draw (1.33,0) -- (1.33,0.3)  ;
\draw (1.33,0.3) -- (1.63,0.3)  ;
\draw (-1.33,0) -- (-1.33,0.3)  ;
\draw (-1.33,0.3) -- (-1.63,0.3)  ;
\end{tikzpicture}
	\caption{A Saccheri quadrilateral.}
\end{figure}
\begin{proof} Applying \eqref{lcos} to the triangles $y,w,z$ and $x,y,w$ respectively, we obtain
\begin{equation}\label{sc1}
\cosh(\ell)=\cosh(a)\cosh(d(y,w))-\sinh(a)\sinh(d(y,w))\cos(\angle_{y}(z,w)),
\end{equation}

\vspace{-6ex}
\begin{equation}\label{sc2}
\cosh(d(y,w))=\cosh(a)\cosh(b)-\sinh(a)\sinh(b)\cos(\pi/2)=\cosh(a)\cosh(b).
\end{equation}
By \eqref{lsin} applied to $x,y,w$ we also have
\begin{equation}\label{sc3}
\sinh(a)=\sinh(d(y,w))\sin(\angle_{y}(x,w))=\sinh(d(y,w))\cos(\angle_{y}(z,w)).
\end{equation}
Replacing \eqref{sc2} and \eqref{sc3} into \eqref{sc1} we have
\begin{align*}
    \cosh(\ell) &=\cosh(d(y,w))\cosh(a)-\sinh(a)[\sinh(d(y,w))\cos(\angle_{y}(z,w))] \\
    &=\cosh^{2}(a)\cosh(b)-\sinh^{2}(a)=\cosh^{2}(a)[\cosh(b)-1]+1,
\end{align*}
and hence 
$2\sinh^{2}({\ell}/2)=\cosh(\ell)-1=\cosh^{2}(a)[\cosh(b)-1]=2\cosh^{2}(a)\sinh^{2}(b/2)$. Dividing by 2 and taking square root the result follows. \end{proof}
Applying identity \eqref{disthip} we obtain $\cosh(d(\mathbf{i},e^{\mathbf{i}\varphi}))=\csc(\varphi)$ for $0<\varphi\leq \pi/2$ and $d(\mathbf{i},\lambda \mathbf{i})=\log(\lambda)$ for $\lambda>1$. In addition, the points $\mathbf{i}, e^{\mathbf{i}\varphi}$, $\lambda e^{\mathbf{i}\varphi}$ and $\lambda \mathbf{i}$ form a Saccheri quadrilateral, and the previous corollary implies
\begin{coro}\label{saco}
If $\lambda>1, 0 < \varphi \leq \pi/2$, then
 \small{$   \sinh \left(\frac{\log(\lambda)}{2}\right)=\csc(\varphi)\sinh \left(\frac{d(e^{\mathbf{i}\varphi},\lambda e^{\mathbf{i}\varphi})}{2}\right)$}.
\end{coro}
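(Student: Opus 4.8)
The plan is to realize the four points $\mathbf{i}$, $e^{\mathbf{i}\varphi}$, $\lambda e^{\mathbf{i}\varphi}$, $\lambda\mathbf{i}$ as the vertices of a Saccheri quadrilateral and then invoke the previous corollary. The two ingredients I would supply directly are the distances $d(\mathbf{i}, e^{\mathbf{i}\varphi})$ and $d(\mathbf{i}, \lambda\mathbf{i})$, both of which come from the explicit formula \eqref{disthip}: substituting $|\mathbf{i} - e^{\mathbf{i}\varphi}|^{2} = 2(1 - \sin\varphi)$ together with the imaginary parts $1$ and $\sin\varphi$ gives $\cosh(d(\mathbf{i}, e^{\mathbf{i}\varphi})) = \csc\varphi$, and substituting $|\mathbf{i} - \lambda\mathbf{i}|^{2} = (\lambda - 1)^{2}$ with imaginary parts $1$ and $\lambda$ gives $\cosh(d(\mathbf{i}, \lambda\mathbf{i})) = \tfrac12(\lambda + \lambda^{-1})$, that is, $d(\mathbf{i}, \lambda\mathbf{i}) = \log\lambda$.

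Next I would check the Saccheri configuration, taking the geodesic segment from $\mathbf{i}$ to $\lambda\mathbf{i}$ (a piece of the imaginary axis) as the base. The segments $\mathbf{i}\,e^{\mathbf{i}\varphi}$ and $\lambda\mathbf{i}\,\lambda e^{\mathbf{i}\varphi}$ lie respectively on the Euclidean circles $|z| = 1$ and $|z| = \lambda$, which are geodesics of $\Hy^{2}$; since the metric $ds^{2} = dz^{2}/\Ima(z)^{2}$ is conformal to the Euclidean one and a line through the centre meets a circle orthogonally, the Riemannian angles $\angle_{\mathbf{i}}(e^{\mathbf{i}\varphi}, \lambda\mathbf{i})$ and $\angle_{\lambda\mathbf{i}}(\mathbf{i}, \lambda e^{\mathbf{i}\varphi})$ both equal $\pi/2$. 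Moreover $z \mapsto \lambda z$ is an isometry of $\Hy^{2}$ sending $\mathbf{i}$ to $\lambda\mathbf{i}$ and $e^{\mathbf{i}\varphi}$ to $\lambda e^{\mathbf{i}\varphi}$, so the two \emph{legs} have equal length $d(\mathbf{i}, e^{\mathbf{i}\varphi})$. Hence $\mathbf{i}, e^{\mathbf{i}\varphi}, \lambda e^{\mathbf{i}\varphi}, \lambda\mathbf{i}$ is a Saccheri quadrilateral with leg of length $a := d(\mathbf{i}, e^{\mathbf{i}\varphi})$, base of length $b := \log\lambda$, and summit of length $\ell := d(e^{\mathbf{i}\varphi}, \lambda e^{\mathbf{i}\varphi})$.

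Finally I would apply the preceding corollary to this quadrilateral: the identity $\sinh(\ell/2) = \cosh(a)\sinh(b/2)$ together with $\cosh(a) = \csc\varphi$ yields the asserted relation between $\log\lambda$, $\varphi$ and $d(e^{\mathbf{i}\varphi}, \lambda e^{\mathbf{i}\varphi})$. I do not expect a genuine obstacle: the only steps needing a word of justification are the two orthogonality claims, which are immediate from conformality of the metric and the elementary geometry of circles, while equality of the two legs is forced by the scaling isometry.
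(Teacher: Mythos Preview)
Your proof is correct and follows exactly the paper's own argument: compute $\cosh(d(\mathbf{i},e^{\mathbf{i}\varphi}))=\csc\varphi$ and $d(\mathbf{i},\lambda\mathbf{i})=\log\lambda$ from \eqref{disthip}, observe that $\mathbf{i},\lambda\mathbf{i},\lambda e^{\mathbf{i}\varphi},e^{\mathbf{i}\varphi}$ form a Saccheri quadrilateral with right angles at $\mathbf{i}$ and $\lambda\mathbf{i}$, and apply the preceding Saccheri identity. One remark: what this argument (yours and the paper's alike) actually produces is $\sinh\!\bigl(d(e^{\mathbf{i}\varphi},\lambda e^{\mathbf{i}\varphi})/2\bigr)=\csc(\varphi)\,\sinh\!\bigl(\log(\lambda)/2\bigr)$, which is the version used in \eqref{parameters} and \eqref{lambdas}; the displayed statement has the factor $\csc\varphi$ on the wrong side.
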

\begin{figure}[hbt]
\begin{tikzpicture}[scale=0.55]
\draw[dashed] (0,0) -- (0,5) ;
\draw[dashed] (0,0) -- (2.8960275,4.81035);
\draw(-5,0) -- (5,0) ;
\draw (1.18,1.96) node {\small{$\bullet$}}; 
\draw (2.2415,3.7231) node{\small{$\bullet$}};
\draw (0,2.2877) node {\small{$\bullet$}};
\draw (0,4.34584) node{\small{$\bullet$}};

\draw(0.3,2.2877) -- (0.3,2.5877) ;
\draw(0.3,2.5877) -- (0,2.5877) ;
\draw(0,4.04584)  -- (0.3,4.04584)  ;
\draw(0.3,4.04584)  -- (0.3,4.34584)  ;

\draw (0,2.2877) -- (0,4.34584); 
\draw (1.18,1.96) arc (58.95034:90:2.2877)  ;
\draw (2.2415,3.7231) arc (58.95034:90:4.34584)  ;
\draw (2.2415,3.7231) arc (138.3705:159.53:5.6045)  ;

\draw (1.18,1.96) node[right] {\small{$e^{\mathbf{i}\varphi}$}}; 
\draw (2.2415,3.7231) node[right]{\small{$\lambda e^{\mathbf{i}\varphi}$}};
\draw (0,2.2877) node[left] {\small{$\mathbf{i}$}};
\draw (0,4.34584) node[left]{\small{$\lambda \mathbf{i}$}};
\draw (0.6,0.3) node {\small{$\varphi$}};
\draw (1,0) arc (0:58.95034:1)  ;
\end{tikzpicture}
	\caption{Saccheri quadrilateral in Corollary \ref{saco}.}
\end{figure}
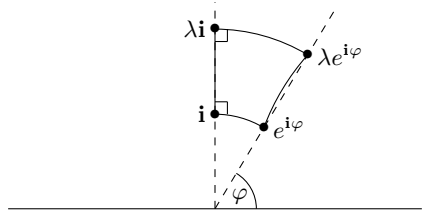

\section{Convex chains and distorted chains}\label{concha}
By a \emph{chain} we always mean an ordered set of points $x_0,x_1,\dots,x_n\in \Hy^2$. Such a chain \emph{lies} (or is \emph{contained}) in a curve $\gamma:I\rightarrow \Hy^2$ (where $I\subset \R$ is an interval) if $x_j=\gamma(t_j)$ and $t_0,t_1,t_2,\ldots \in I$ is a monotone sequence. 
In what follows we will only deal with hyperbolic geometry, so notions such as segments, polygons, half planes, convex hulls, etc. are always considered with respect to the hyperbolic distance.   
\begin{defi}\label{condef}
A chain $x_0,x_1,\dots,x_n\in \Hy^{2}$ is called \emph{convex} if the convex hull of $x_0,\dots,x_n$ is the polygon with sides $x_0x_1,x_1x_2,\dots,x_{n-1}x_n,x_nx_0$. 
\end{defi}
By a simple induction argument, every convex chain has non-negative tension.

\begin{defi} Let $x_0,\dots,x_n \in \Hy^{2}$ be a $\varphi$-good chain with $0<\varphi\leq \pi/2$. The $\varphi$-\emph{distorted chain} of $x_0,\dots,x_n$ is the chain $y_0,\dots,y_n$ in $\Hy^{2}$ contained in a curve with constant geodesic curvature equal to $\cos(\varphi)$ and so that $d(y_j,y_{j-1})=d(x_j,x_{j-1})$ for $1\leq j\leq n$.
\end{defi}


Clearly all distorted chains are convex. The goal of this section is to prove the following proposition, allowing us to work with convex chains contained in curves of constant geodesic curvature:
\begin{prop}\label{goodpos}
Let $x_0,\dots,x_n \in \Hy^{2}$ be a $\varphi$-good convex chain and consider the corresponding $\varphi$-distorted chain $y_0\dots,y_n$. 
Then 
\begin{equation*}
\tau(x_0,\dots,x_n)\leq \tau(y_0,\dots,y_n).
\end{equation*}
\end{prop}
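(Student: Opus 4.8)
The plan is to compare the two chains $x_0,\dots,x_n$ and $y_0,\dots,y_n$ term-by-term in the definition of tension
$$\tau = \sum_{j=1}^{n-1} d(x_{j-1},x_{j+1}) - \sum_{j=2}^{n-1} d(x_{j-1},x_j) - d(x_0,x_n).$$
Both chains have the same consecutive edge-lengths $d(x_j,x_{j-1}) = d(y_j,y_{j-1})$, so the middle sum $\sum_{j=2}^{n-1} d(\cdot,\cdot)$ is identical for the two chains and drops out of the comparison. Thus it suffices to show two things: first, that each ``skip-one'' distance satisfies $d(x_{j-1},x_{j+1}) \le d(y_{j-1},y_{j+1})$ for $1 \le j \le n-1$; and second, that the overall chord satisfies $d(y_0,y_n) \le d(x_0,x_n)$. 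Summing the first family of inequalities and subtracting the second yields exactly $\tau(x_0,\dots,x_n) \le \tau(y_0,\dots,y_n)$.

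For the first family, fix $j$ and look at the triangle $x_{j-1},x_j,x_{j+1}$. By the definition of Gromov product, $d(x_{j-1},x_{j+1}) = d(x_{j-1},x_j) + d(x_j,x_{j+1}) - 2\groprod{x_{j-1}}{x_{j+1}}{x_j}$. The $\varphi$-distorted chain is built so that its consecutive edges have the same lengths, but it is contained in a curve of constant geodesic curvature $\cos\varphi$, which forces a specific value of its Gromov products $\groprod{y_{j-1}}{y_{j+1}}{y_j}$ (these are determined by the two edge lengths and $\varphi$ via a Saccheri-quadrilateral computation as in Corollary~\ref{saco}); one checks that $\varphi$-goodness of the original chain gives $\groprod{x_{j-1}}{x_{j+1}}{x_j} \ge \groprod{y_{j-1}}{y_{j+1}}{y_j}$, i.e. the original chain is at least as ``bent'' at each vertex as the canonical model, so by the hyperbolic law of cosines (with fixed adjacent sides, $d$ is monotone in the enclosed Gromov product) we get $d(x_{j-1},x_{j+1}) \le d(y_{j-1},y_{j+1})$.

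For the chord inequality $d(y_0,y_n) \le d(x_0,x_n)$ I would argue by induction on $n$, peeling off one vertex at a time and using convexity. The key geometric input is a local-to-global monotonicity statement: among all convex chains with prescribed edge lengths and prescribed (pointwise larger) bending at each vertex, the one lying on the constant-curvature curve has the largest chord. Concretely, replace $x_0,\dots,x_n$ by $x_0,\dots,x_{n-2},y'$ where $y'$ is chosen so that $x_{n-2},y'$ has length $d(x_{n-2},x_{n-1})$ and the new last vertex sits on the appropriate curvature-$\cos\varphi$ arc through $x_0,\dots,x_{n-2}$'s terminal data; show this can only decrease (or keep fixed) the earlier part of the comparison while the chord statement is inherited from the $n-1$ case, then straighten the final edge similarly. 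Convexity is what guarantees that these successive ``flattenings toward the canonical curve'' act monotonically on the chord and do not create crossings.

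The main obstacle I expect is the chord inequality, not the skip-one inequalities. The per-vertex bound is essentially a one-triangle computation, but $d(y_0,y_n) \le d(x_0,x_n)$ is a genuinely global statement — it is exactly the content of the Epstein/Granados comparison results cited in the introduction — and making the inductive ``vertex-peeling'' rigorous requires carefully tracking that each elementary move (fixing all but one vertex and sliding it onto the model curve) both preserves convexity and moves the chord in the right direction. I would expect the cleanest route to be: reduce via the skip-one analysis to an auxiliary chain with the same edge lengths and with Gromov products exactly equal to the canonical ones, then prove that this auxiliary chain — which has more vertices constrained — still has chord at most $d(x_0,x_n)$, and finally identify the auxiliary chain with $y_0,\dots,y_n$ up to isometry using the uniqueness of constant-curvature curves through prescribed data (Corollary~\ref{saco} and the discussion of canonical chains).
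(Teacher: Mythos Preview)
Your decomposition into skip-one inequalities plus a chord inequality does not work, because your skip-one inequality points the wrong way. You claim that $\varphi$-goodness forces $\groprod{x_{j-1}}{x_{j+1}}{x_j} \ge \groprod{y_{j-1}}{y_{j+1}}{y_j}$, i.e.\ that the original chain is \emph{more} bent at each vertex than the distorted one. The opposite is true: the $\varphi$-distorted chain lies on the curve of curvature $\cos\varphi$, which is the \emph{extremal} (most bent) configuration compatible with $\varphi$-goodness. This is exactly Lemma~\ref{comp3} of the paper, which shows $\angle_{x_j}(x_{j-1},x_{j+1}) \ge \angle_{y_j}(y_{j-1},y_{j+1})$. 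Since the two adjacent edges agree, the law of cosines then gives $d(x_{j-1},x_{j+1}) \ge d(y_{j-1},y_{j+1})$, not $\le$.

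With the correct signs the term-by-term strategy collapses: the skip-one sum for $x_0,\dots,x_n$ is \emph{larger} than for $y_0,\dots,y_n$, while (as you correctly note) the chord $d(x_0,x_n)$ is also larger than $d(y_0,y_n)$. These two effects push $\tau$ in opposite directions, and the desired inequality $\tau(x_0,\dots,x_n) \le \tau(y_0,\dots,y_n)$ is precisely the statement that the chord gain dominates the accumulated skip-one gain --- something that cannot be extracted by summing independent one-triangle bounds. The paper avoids this trap by never separating the terms: it interpolates from $\mathfrak{y}$ to $\mathfrak{x}$ by increasing the interior angles one vertex at a time (the direction of increase being justified by Lemma~\ref{comp3}) and proves a differential monotonicity result (Lemma~\ref{lemachoro}) showing that each such angle-opening makes the \emph{full} tension non-increasing, handling all terms of $\tau$ simultaneously.
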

We begin with a lemma.
\begin{lema}\label{comp3} Let $x_0,x_1,x_2$ be a $\varphi$-good chain in $\Hy^2$ with $0<\varphi\leq \pi/2$. If $y_0,y_1,y_2$ is the $\varphi$-distorted chain for $x_0,x_1,x_2$, then $\angle_{x_1}(x_0,x_2)\geq\angle_{y_1}(y_0,y_2)$.
\end{lema}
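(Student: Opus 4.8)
The plan is to reduce the inequality $\angle_{x_1}(x_0,x_2)\geq\angle_{y_1}(y_0,y_2)$ to a monotonicity statement about the hyperbolic law of cosines, using that both chains share the two side lengths $d(x_1,x_0)=d(y_1,y_0)$ and $d(x_1,x_2)=d(y_1,y_2)$ but that the distorted chain $y_0,y_1,y_2$ is "more spread out" in the sense of having a larger Gromov product at the apex. Concretely, set $p=d(x_0,x_1)=d(y_0,y_1)$ and $q=d(x_1,x_2)=d(y_1,y_2)$. By \eqref{lcos} applied to the triangle with vertex at $x_1$ (resp.\ $y_1$), the angle $\angle_{x_1}(x_0,x_2)$ is a strictly decreasing function of the opposite side $d(x_0,x_2)$, since $\cos(\angle_{x_1}(x_0,x_2)) = \frac{\cosh(p)\cosh(q)-\cosh(d(x_0,x_2))}{\sinh(p)\sinh(q)}$ and likewise for $y$. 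Hence the claim $\angle_{x_1}(x_0,x_2)\geq\angle_{y_1}(y_0,y_2)$ is \emph{equivalent} to $d(x_0,x_2)\leq d(y_0,y_2)$. So the first step is this reformulation.

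The second step is to prove $d(x_0,x_2)\leq d(y_0,y_2)$. Using the definition $\groprod{x_0}{x_2}{x_1}=\tfrac12\bigl(d(x_0,x_1)+d(x_1,x_2)-d(x_0,x_2)\bigr)$, we have $d(x_0,x_2)=p+q-2\groprod{x_0}{x_2}{x_1}$ and similarly $d(y_0,y_2)=p+q-2\groprod{y_0}{y_2}{y_1}$, so the desired inequality is exactly $\groprod{y_0}{y_2}{y_1}\leq \groprod{x_0}{x_2}{x_1}$. Now by the good-chain hypothesis \eqref{goodc}, $\groprod{x_0}{x_2}{x_1}\leq b$, where $(a,b)$ is the good pair whose curvature angle is $\varphi$. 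So it suffices to show that the distorted chain realizes the \emph{extremal} Gromov product $\groprod{y_0}{y_2}{y_1}=b_0$ where $b_0$ is determined by the constraint that $(d(y_0,y_1),b_0)$ — actually one must be slightly careful since $p$ need not equal $a$. The right statement is: among all three-point configurations with fixed side lengths $p,q$ lying (in order) on a curve of constant geodesic curvature $\cos\varphi$, the Gromov product at the middle vertex is a specific value $b(p,q,\varphi)$, and for a $\varphi$-good chain with those same side lengths one has $\groprod{x_0}{x_2}{x_1}\geq b(p,q,\varphi)$. This last inequality is where condition \eqref{goodp} enters: the chain being $\varphi$-good forces the Gromov product bound to be compatible with, and in fact to dominate, the value attained on the model curve. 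I expect this comparison can be extracted from the Saccheri-quadrilateral computation behind Corollary \ref{saco} together with \eqref{parameters}, which pin down exactly how $\lambda$ and $\varphi$ encode the pair $(a,b)$; the key identity to exploit is $\csc(\varphi)\sinh(\log(\lambda)/2)=\sinh(a/2)$ and $\cosh(\log(\lambda)/2)=\sinh(a-b)/(2\sinh(a/2))$.

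For the explicit model computation, I would place $y_1 = \mathbf{i}$ after applying an isometry, with the curve through $y_0,y_1,y_2$ being (a rotated image of) $t\mapsto te^{\mathbf{i}\varphi}$ — equivalently a curve at constant distance from a fixed geodesic. Then $d(y_0,y_2)$ can be computed via a Saccheri quadrilateral whose base is the reference geodesic and whose two equal legs drop from $y_0$ and $y_2$: this gives $\sinh(d(y_0,y_2)/2)=\cosh(h)\sinh(w/2)$ where $h$ is the (constant) distance from the curve to the geodesic and $w$ is the length of the geodesic segment between the feet, which is precisely the computation already done to prove Corollary \ref{saco}. Matching $\cos(\angle_{y_1})$ to this via \eqref{lcos} yields a closed form for $\angle_{y_1}(y_0,y_2)$ in terms of $p,q,\varphi$. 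A cleaner route, avoiding coordinates, is to observe that for a chain on a constant-curvature-$\cos\varphi$ curve the angle at each vertex is determined by the adjacent side lengths and $\varphi$ alone (this is the hyperbolic analogue of the fact that curvature controls turning), and then the $\varphi$-good hypothesis says each turning angle of $x_0,x_1,x_2$ is at most that of the model — again because \eqref{goodc} bounds the Gromov product by $b$, which is by construction the model value.

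\textbf{Main obstacle.} The crux is making precise the claim that "being $\varphi$-good forces the turning at $x_1$ to be no more than the model turning," when the side lengths $p=d(x_0,x_1)$ and $q=d(x_1,x_2)$ may differ from each other and from $a$. The good-chain definition controls $d(x_j,x_{j+1})\geq a$ and $\groprod{x_{j-1}}{x_{j+1}}{x_j}\leq b$ separately, so one must check that the bound $\groprod{x_0}{x_2}{x_1}\leq b$ really does imply $d(x_0,x_2)\leq d(y_0,y_2)$ for the distorted chain, i.e.\ that the model's Gromov product at the middle vertex equals (or is at most) $b$ even with the possibly-unequal and possibly-larger side lengths $p,q$. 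Verifying this amounts to a monotonicity check: the model Gromov product $b(p,q,\varphi)$ should be nonincreasing in $p$ and $q$ (for fixed $\varphi$) and equal to $b$ when $p=q=a$. This monotonicity — which is geometrically the statement that lengthening the sides along a fixed-curvature curve does not increase the "defect from a geodesic" at the vertex — is the technical heart of the lemma; everything else is the law of cosines plus the Saccheri identity of Corollary \ref{saco}.
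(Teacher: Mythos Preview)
Your reformulation contains a sign error that propagates through the entire plan. From the very formula you wrote,
\[
\cos\bigl(\angle_{x_1}(x_0,x_2)\bigr)=\frac{\cosh(p)\cosh(q)-\cosh(d(x_0,x_2))}{\sinh(p)\sinh(q)},
\]
one sees that $\cos(\angle_{x_1})$ \emph{decreases} as $d(x_0,x_2)$ increases, so the angle is a strictly \emph{increasing} function of the opposite side, not decreasing. Hence $\angle_{x_1}(x_0,x_2)\geq\angle_{y_1}(y_0,y_2)$ is equivalent to $d(x_0,x_2)\geq d(y_0,y_2)$, i.e.\ to $\groprod{x_0}{x_2}{x_1}\leq \groprod{y_0}{y_2}{y_1}$ --- the reverse of what you derived (though, tellingly, consistent with your opening heuristic that the distorted chain has \emph{larger} Gromov product). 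The monotonicity you need is therefore the opposite one: the model Gromov product $b(p,q,\varphi)$ on the constant-curvature curve must be \emph{nondecreasing} in $p$ and $q$, so that $p,q\geq a$ gives $\groprod{y_0}{y_2}{y_1}=b(p,q,\varphi)\geq b(a,a,\varphi)=b\geq\groprod{x_0}{x_2}{x_1}$. This corrected monotonicity is in fact true (a short computation with Corollary~\ref{saco} yields it), so the strategy is salvageable once all the signs are fixed; but as written, every inequality in your second and third paragraphs points the wrong way.

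For comparison, the paper's proof avoids computing or bounding $b(p,q,\varphi)$ altogether. It works directly with angles via the half-angle identity
\[
\sin^{2}\!\left(\frac{\angle_{x_1}(x_0,x_2)}{2}\right)=\frac{\sinh\bigl(p-\groprod{x_0}{x_2}{x_1}\bigr)\sinh\bigl(q-\groprod{x_0}{x_2}{x_1}\bigr)}{\sinh(p)\sinh(q)},
\]
and the elementary monotonicity of $(x,y)\mapsto\sinh(x-y)/\sinh(x)$ (increasing in $x$, decreasing in $y$). This immediately gives $\angle_{x_1}(x_0,x_2)\geq\angle_{z_1}(z_0,z_2)$, where $z_0,z_1,z_2$ is the $(a,b)$-\emph{canonical} chain on the same curve with $z_1=y_1$. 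The remaining inequality $\angle_{z_1}(z_0,z_2)\geq\angle_{y_1}(y_0,y_2)$ is then purely geometric: since $d(y_1,y_j)\geq a=d(z_1,z_j)$, each $z_j$ lies between $y_1$ and $y_j$ on the convex curve. Introducing this intermediate canonical chain is the device that replaces your ``model Gromov product'' computation.
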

\begin{proof}
Assume that $y_0,y_1,y_2$ lie in the curve  $\mu:t\rightarrow te^{i\varphi}$ with $|y_2|>|y_0|$ and consider the $(a,b)$-canonical chain $z_0,z_1,z_2$ contained in $\mu$, with $z_1=y_1$ and $|z_2|>|z_0|$. The map $(x,y)\mapsto \frac{\sinh(x-y)}{\sinh(x)}$ is increasing in $x$ for $0\leq x$ and decreasing in $y$ for $0<y<x$, which by \eqref{lcos} implies
\begin{alignat*}{2}
\sin^{2}\left(\frac{\angle_{x_1}(x_0,x_2)}{2}\right) &=\frac{\sinh(d(x_0,x_1)-\groprod{x_0}{x_2}{x_1})\sinh(d(x_1,x_2)-\groprod{x_0}{x_2}{x_1})}{\sinh(d(x_0,x_1))\sinh(d(x_1,x_2))}\\
&\geq \frac{\sinh^{2}(a-b)}{\sinh^{2}(a)}=\sin^{2}\left(\frac{\angle_{z_1}(z_0,z_2)}{2}\right)
\end{alignat*}
and hence $\angle_{x_1}(x_0,x_2)\geq \angle_{z_1}(z_0,z_2)$. 

On the other hand, since $\min(d(y_0,y_1),d(y_1,y_2))\geq a=d(z_0,z_1)=d(z_1,z_2)$, for $j=0,2$, the point $z_{j}$ lies between the points $y_1$ and $y_j$ in $\mu$, implying $\angle_{z_1}(z_0,z_2)\geq\angle_{y_1}(y_0,y_2)$ and obtaining the desired inequality. \end{proof}
\begin{figure}[hbt]
\begin{tikzpicture}[scale=0.55]
\draw[dashed] (0,0) -- (15.1875,5.990625) ;
\draw (-1.5,0) -- (16,0);
\draw (3.51125,1.38499) node[below] {\small{$y_{0}$}}; 
\draw (3.51125,1.38499) node{\small{$\bullet$}};
\draw (5.4,2.13) node[below] {\small{$z_0$}};
\draw (5.4,2.13) node{\small{$\bullet$}};
\draw (8.1,3.195) node[below right] {\small{$y_{1}=z_1$}}; 
\draw (8.1,3.195) node{\small{$\bullet$}};
\draw (12.15,4.7925) node[below] {\small{$z_{2}$}}; 
\draw (12.15,4.7925) node{\small{$\bullet$}};
\draw (13.1562,5.189422) node[below] {\small{$y_{2}$}}; 
\draw (13.1562,5.189422) node{\small{$\bullet$}};
\draw (8.1,3.195) arc (66.471:156.581:3.4847)  ;
\draw (8.1,3.195) arc (84.639:138.413:3.20903)  ;
\draw (12.15,4.7925) arc (84.639:138.413:4.81355)  ;
\draw (13.1562,5.189422) arc (80.434:142.618:5.26259)  ;
\draw (12.15,4.7925) arc (67.249:155.803:5.1968)  ;
\draw (13.1562,5.189422) arc (55.806:167.246:6.2739)  ;
\draw (1.55,0.3) node {\small{$\varphi$}};
\draw (2,0) arc (0:21.5264:2)  ;
\end{tikzpicture}
	\caption{Proof of Lemma \ref{goodpos}.}
\end{figure}

Now we define the following process for a good convex chain $ \mathfrak{x}= x_0,\dots,x_n$. Fix $1\leq k\leq n-1$, and let $\alpha=\angle_{x_k}(x_{k-1},x_{k+1})$. For $\alpha\leq \gamma \leq \pi$, let $\mathfrak{x}^{(k)}(\gamma)$ be the unique convex chain  $x_0(\gamma),\dots,x_n(\gamma)$ satisfying $d(x_j(\gamma),x_{j-1}(\gamma))=d(x_j,x_{j-1})$ for $1\leq j\leq n$, $\angle_{x_j(\gamma)}(x_{j-1}(\gamma),x_{j+1}(\gamma))=\angle_{x_{j}}(x_{j-1},x_{j+1})$ for $1\leq j\leq n-1$ with $j\neq k$, and $\angle_{x_{k}(\gamma)}(x_{k-1}(\gamma),x_{k+1}(\gamma))=\gamma$. 


For such construction we prove the following:

\begin{lema}\label{lemachoro} Assume $d(x_0,x_n)\geq d(x_i,x_j)$ for all $0 \leq i<j \leq n$ with equality only if $i,j=0,n$. Then given $0\leq p\leq q\leq r\leq s\leq n$, the map $\gamma \mapsto t_{p,q,r,s}(\gamma)=d(x_{p}(\gamma),x_{s}(\gamma))-d(x_q(\gamma),x_r(\gamma))$ is non decreasing for $\alpha\leq \gamma \leq \pi$. In particular, $\gamma \mapsto \tau(\mathfrak{x}^{(k)}(\gamma))$ is non increasing. 
\end{lema}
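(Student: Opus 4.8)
The plan is to regard $\gamma$ as a time parameter and differentiate. Since the edge lengths $d(x_{j-1}(\gamma),x_j(\gamma))$ and all interior angles $\angle_{x_j(\gamma)}(x_{j-1}(\gamma),x_{j+1}(\gamma))$ with $j\neq k$ are frozen, the sub-chains $L=\big(x_0(\gamma),\dots,x_k(\gamma)\big)$ and $M=\big(x_k(\gamma),\dots,x_n(\gamma)\big)$ are rigid polygonal arcs hinged at $x_k(\gamma)$, and letting $\gamma$ grow amounts to rotating $M$ rigidly about $x_k(\gamma)$ while $L$ stays put (and symmetrically). The first step is a variation formula: for $i<k<j$, convexity of $\mathfrak x^{(k)}(\gamma)$ arranges the rays from $x_k$ to the other vertices in their chain order, so $\angle_{x_k}(x_i,x_j)=\gamma-e_{ij}$ for a nonnegative constant $e_{ij}=\angle_{x_k}(x_{k-1},x_i)+\angle_{x_k}(x_j,x_{k+1})$, and this angle stays in $[0,\pi]$; feeding this into the law of cosines (the two sides at $x_k$ being frozen) shows $\gamma\mapsto d(x_i(\gamma),x_j(\gamma))$ is non-decreasing, and differentiating and using the law of sines gives
\[
\frac{d}{d\gamma}\,d(x_i(\gamma),x_j(\gamma))=\sinh\!\big(d(x_k(\gamma),\ell_{ij}(\gamma))\big),
\]
where $\ell_{ij}(\gamma)$ is the geodesic line through $x_i(\gamma),x_j(\gamma)$ — the right-hand side is just the length of the altitude from $x_k$ in the triangle $x_ix_kx_j$. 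When $i$ and $j$ lie on the same closed side of $k$ the distance $d(x_i(\gamma),x_j(\gamma))$ is constant.

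Next I cut down the number of moving pairs. From $t_{p,q,r,s}=\big(d(x_p,x_s)-d(x_q,x_s)\big)+\big(d(x_q,x_s)-d(x_q,x_r)\big)$ it is enough to prove that every function $\gamma\mapsto d(x_a(\gamma),x_c(\gamma))-d(x_b(\gamma),x_c(\gamma))$ with $a\le b\le c$ (and its mirror with the shared vertex first) is non-decreasing. In every case where $d(x_b,x_c)$ is constant — in particular when $c\le k$ or $k\le b$ — this reduces to the monotonicity of the first step. The only genuinely new case is $a\le b<k<c$: there both derivatives are given by the variation formula, and the two lines $\ell_{ac}(\gamma)$, $\ell_{bc}(\gamma)$ share the vertex $x_c(\gamma)$. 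Writing $d(x_k,\ell_{ac})=\arcsinh\!\big(\sinh d(x_c,x_k)\,\sin\angle_{x_c}(x_a,x_k)\big)$ and likewise for $\ell_{bc}$, the desired $d(x_k,\ell_{ac})\ge d(x_k,\ell_{bc})$ becomes $\sin\angle_{x_c}(x_a,x_k)\ge\sin\angle_{x_c}(x_b,x_k)$; convexity gives $\angle_{x_c}(x_a,x_k)=\angle_{x_c}(x_a,x_b)+\angle_{x_c}(x_b,x_k)\ge\angle_{x_c}(x_b,x_k)$, so it suffices that the larger angle $\angle_{x_c}(x_a,x_k)$ be at most $\pi/2$.

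The crux — and the step I expect to be the main obstacle — is therefore to show $\angle_{x_c(\gamma)}(x_a(\gamma),x_k(\gamma))\le\pi/2$ for all $\gamma\in[\alpha,\pi]$, equivalently (law of cosines) that $x_ax_k$ is never the strictly longest side of $x_ax_kx_c$; this is where the hypothesis $d(x_0,x_n)\ge d(x_i,x_j)$ gets used. The statement is stable under the deformation — $d(x_a,x_k)$ and $d(x_k,x_c)$ are constant and $d(x_a,x_c)$ is non-decreasing by the first step — so it is enough to verify it at $\gamma=\alpha$, for the original chain $\mathfrak x$. For $\mathfrak x$ one invokes that, along a convex chain, the distance from a fixed vertex to the others is a unimodal function of the index; together with the assumption that $d(x_0,x_n)$ is the diameter of $\{x_0,\dots,x_n\}$ this forces $d(x_i,x_j)\ge\max\{d(x_i,x_k),d(x_k,x_j)\}$ whenever $i<k<j$, which yields the required acuteness (and the analogue for the mirror case).

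For the last assertion, all edge lengths of $\mathfrak x^{(k)}(\gamma)$ and every $d(x_{j-1}(\gamma),x_{j+1}(\gamma))$ with $j\ne k$ are determined by frozen data, so $\tau\big(\mathfrak x^{(k)}(\gamma)\big)=C+d(x_{k-1}(\gamma),x_{k+1}(\gamma))-d(x_0(\gamma),x_n(\gamma))=C-t_{0,k-1,k+1,n}(\gamma)$ with $C$ independent of $\gamma$; since $t_{0,k-1,k+1,n}$ is non-decreasing, $\tau\big(\mathfrak x^{(k)}(\gamma)\big)$ is non-increasing. The remaining points needing care are the degenerate configurations (collinear triples, $\gamma=\pi$, coincidences like $p=q$) and the fact, used throughout, that $\mathfrak x^{(k)}(\gamma)$ stays a bona fide convex chain for every $\gamma\in[\alpha,\pi]$.
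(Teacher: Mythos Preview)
Your variational setup is exactly right and matches the paper: differentiate using the law of cosines, recognise the derivative as a hyperbolic altitude, and compare altitudes by looking at angles at a shared vertex. The stability observation --- that $\angle_{x_c}(x_a,x_k)\le\pi/2$ is equivalent to $\cosh d(x_a,x_k)\le\cosh d(x_a,x_c)\cosh d(x_k,x_c)$, whose right side is nondecreasing in $\gamma$ --- is correct, and the final paragraph on $\tau$ is fine.

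The gap is the verification of acuteness at $\gamma=\alpha$. You assert that in a convex chain the distance from a fixed vertex to the others is unimodal; this is \emph{false} for general convex polygons (e.g.\ the Euclidean pentagon $(0,0),(0.05,-5),(0.1,0),(0.1,10),(0,10)$ gives distances $5,\,0.1,\,10,\,10$ from the first vertex), and you give no argument that the diameter hypothesis repairs it. Since your decomposition $t_{p,q,r,s}=(d_{p,s}-d_{q,s})+(d_{q,s}-d_{q,r})$ forces you to control $\angle_{x_s}(x_p,x_k)$ and $\angle_{x_q}(x_k,x_s)$ for \emph{arbitrary} $p<k<s$ and $q<k<s$, you need the strong claim $d(x_i,x_j)\ge\max\{d(x_i,x_k),d(x_k,x_j)\}$ for every $i<k<j$, and this is not justified.

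The paper avoids this by a different reduction: it shows (``by an inductive argument'') that it suffices to treat $p=0$, $s=n$ with $\{q,r\}=\{0,n-1\}$ or $\{1,n\}$. For those two cases the required acuteness is $\angle_{x_0}(x_k,x_n)\le\pi/2$ and $\angle_{x_n}(x_0,x_k)\le\pi/2$, and these follow \emph{directly} from the diameter hypothesis: $d(x_1,x_n)<d(x_0,x_n)$ gives $\cosh d(x_1,x_n)<\cosh d(x_0,x_1)\cosh d(x_0,x_n)$, hence $\angle_{x_0}(x_1,x_n)<\pi/2$, and convexity then gives $\angle_{x_0}(x_k,x_n)\le\angle_{x_0}(x_1,x_n)$. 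The paper then runs a continuation (bootstrap) argument: the monotonicity of $t_{0,0,n-1,n}$ and $t_{0,1,n,n}$, once established on an initial interval, keeps $d(x_0,x_n)>d(x_0,x_{n-1}),\,d(x_1,x_n)$ and hence the acuteness alive, so the interval extends to $[\alpha,\pi]$. Your argument would go through if you either (i) proved that the diameter condition is inherited by every sub-chain $x_p,\dots,x_s$, or (ii) rerouted your reduction through the endpoint cases as the paper does.
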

\begin{proof}By an inductive argument it is enough to show the result for $p=0$, $r-q=n-1$ and $s=n$. Define $d_{i,j}=d(x_i(\gamma),x_j(\gamma))$ and $\alpha_{i,j,l}=\angle_{x_{j}(\gamma)}(x_i(\gamma),x_l(\gamma))$. We will compute the derivative $(t_{q,r})_{\gamma}=(t_{0,q,r,n})_{\gamma}=(d_{0,n})_{\gamma}-(d_{q,r})_{\gamma}$ presenting the computations for $q=0, r=n-1$ since the other case is similar. By \eqref{lcos} we have the relations
\begin{equation*}
\cosh(d_{0,n})=\cosh(d_{0,k})\cosh(d_{k,n})-\sinh(d_{0,k})\sinh(d_{k,n})\cos(\alpha_{0,k,n}),
\end{equation*}
\begin{equation*}
\cosh(d_{0,n-1})=\cosh(d_{0,k})\cosh(d_{k,n-1})-\sinh(d_{0,k})\sinh(d_{k,n-1})\cos(\alpha_{0,k,n-1}),
\end{equation*}
which by implicit differentiation imply
\begin{equation*}
(d_{0,n})_{\gamma}\sinh(d_{0,n})=\sinh(d_{0,k})\sinh(d_{k,n})\sin(\alpha_{0,k,n}),
\end{equation*}
\begin{equation*}
(d_{0,n-1})_{\gamma}\sinh(d_{0,n-1})=\sinh(d_{0,k})\sinh(d_{k,n-1})\sin(\alpha_{0,k,n-1}).
\end{equation*}
The law of sines \eqref{lsin} also gives us
\small{\begin{alignat*}{2}
(d_{0,n})_{\gamma}-(d_{0,n-1})_{\gamma}&=\sinh(d_{0,k})\left(\frac{\sinh(d_{k,n})\sin(\alpha_{k,0,n})}{\sinh(d_{0,n})}-\frac{\sinh(d_{k,n-1})\sin(\alpha_{k,0,n-1})}{\sinh(d_{0,n-1})}\right)\\
&=\sinh(d_{0,k})(\sin(\alpha_{k,0,n})-\sin(\alpha_{k,0,n-1})).
\end{alignat*}}
\normalsize
Since $x_0(\gamma),\dots,x_n(\gamma)$ is a convex chain for all $\alpha\leq \gamma \leq \pi$, we have $0\leq \alpha_{k,0,n-1}\leq \alpha_{k,0,n}$, and hence $(t_{0,n-1})_{\gamma}\geq 0$ whenever $\alpha_{k,0,n}\leq \pi/2$. Similarly, $(t_{1,n})_{\gamma}\geq 0$ whenever $\alpha_{0,n,k}\leq \pi/2$. To prove that both conditions hold for $\alpha\leq \gamma \leq \pi$, let $\alpha\leq u\leq \pi$ be the maximal angle so that $\alpha_{k,0,n},\alpha_{0,n,k}\leq \pi/2$ and  $(t_{0,n-1})_{\gamma},(t_{1,n})_{\gamma}\geq 0$ for all $\alpha\leq \gamma<u$. We will prove that $u=\pi$. 

Our assumption about $x_0,\dots, x_n$ implies $$d(x_0(\gamma),x_n(\gamma))>d(x_0(\gamma),x_{n-1}(\gamma)),d(x_1(\gamma),x_{n}(\gamma)),$$ and hence $\angle_{x_0(\gamma)}(x_k(\gamma),x_n(\gamma)),\angle_{x_n(\gamma)}(x_0(\gamma),x_k(\gamma))<\pi/2$ for $\gamma$ in a neighborhood of $\alpha$, implying $\alpha<u$. But if $u<\pi$, since $(t_{1,n})_{\gamma}$ is non negative in $(\alpha,u)$, by the mean value theorem we have $$d(x_0(u),x_n(u))-d(x_0(u),x_{n-1}(u))\geq d(x_0(\alpha),x_n(\alpha))-d(x_0(\alpha),x_{n-1}(\alpha))>0,$$ implying $\angle_{x_0(\gamma)}(x_k(\gamma),x_n(\gamma))<\pi/2$, in a neighborhood of $u$. This also happens for $(t_{0,n-1})_{\gamma}$, contradicting the definition of $u$ and completing the proof of the lemma.
\end{proof}

\begin{proof}[Proof of Proposition \ref{goodpos}]
Let $\mathfrak{x}=x_0,\dots,x_n$, $\mathfrak{y}=y_0,\dots,y_n$, and consider the sequence $\mathfrak{z}_{0}, \mathfrak{z}_{1},\dots,\mathfrak{z}_{n-1} $ of convex chains defined inductively by $\mathfrak{z}_{0}=\mathfrak{y}$, and $\mathfrak{z}_{k}=\mathfrak{z}_{k-1}^{(k)}(\angle_{x_k}(x_{k-1},x_{k+1}))$ for $1\leq k\leq n-1$. By Lemma \ref{comp3}, $\angle_{x_{k}}(x_{k-1},x_{k+1})\geq \angle_{y_k}(y_{k-1},y_{k+1})$ for $1\leq k\leq n-1$, and since $\mathfrak{z}_0$ lies in a curve of constant geodesic curvature less than 1, we are in the assumptions of Lemma \ref{lemachoro}, therefore $\tau(\mathfrak{z}_{1}) \leq \tau(\mathfrak{z}_{0})$. Inductively, this assumption holds for every $1 \leq k\leq n-1$, and hence $\tau(\mathfrak{z}_{1}) \leq \tau(\mathfrak{z}_{0})$. Since  $\mathfrak{z}_{n-1}=\mathfrak{x}$, we are done.
\end{proof}
As an immediate corollary of the previous proofs we have:
\begin{coro}\label{agu}
If $x_0,x_1,\dots,x_n$ is a good convex chain in $\Hy^2$ then $\angle_{x_0}(x_1,x_n)\leq\pi/2$ and $d(x_0,x_n)\geq d(x_i,x_j)$ for all $0\leq i<j\leq n$.
\end{coro}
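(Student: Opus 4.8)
The plan is to reduce both assertions to the single inequality $d(x_0,x_n)\ge d(x_i,x_j)$ for all $0\le i<j\le n$, which is essentially already established in the course of proving Proposition \ref{goodpos}, and then to read off the angle bound from the law of cosines.

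\textbf{Step 1 (the distance inequality).} I would run the deformation sequence $\mathfrak z_0=\mathfrak y,\mathfrak z_1,\dots,\mathfrak z_{n-1}=\mathfrak x$ from the proof of Proposition \ref{goodpos}, where $\mathfrak x=x_0,\dots,x_n$ is the given chain and $\mathfrak y=y_0,\dots,y_n$ is its $\varphi$-distorted chain. First I check the base case: writing $y_j=t_je^{\mathbf i\varphi}$ with $0<t_0<t_1<\dots<t_n$, the four points $\mathbf i,(t_j/t_i)\mathbf i,e^{\mathbf i\varphi},(t_j/t_i)e^{\mathbf i\varphi}$ form a Saccheri quadrilateral, so applying Corollary \ref{saco} and then the isometry $z\mapsto t_iz$ gives $\sinh\!\big(d(y_i,y_j)/2\big)=\sin(\varphi)\sinh\!\big(\tfrac12\log(t_j/t_i)\big)$. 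Since $t_j/t_i\le t_n/t_0$, with equality only when $(i,j)=(0,n)$, this shows that $\mathfrak z_0=\mathfrak y$ satisfies the hypothesis of Lemma \ref{lemachoro}. As observed in the proof of Proposition \ref{goodpos}, Lemma \ref{comp3} guarantees that at each step the target angle $\angle_{x_k}(x_{k-1},x_{k+1})$ is at least the current angle at the $k$-th vertex, so Lemma \ref{lemachoro} applies at every step; since that lemma asserts that $\gamma\mapsto t_{0,i,j,n}(\gamma)=d(x_0(\gamma),x_n(\gamma))-d(x_i(\gamma),x_j(\gamma))$ is non-decreasing, the inequality $d_{0,n}\ge d_{i,j}$ (strict for non-endpoint pairs) is inherited by $\mathfrak z_k$ from $\mathfrak z_{k-1}$. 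Inductively $\mathfrak z_{n-1}=\mathfrak x$ satisfies it, which is exactly $d(x_0,x_n)\ge d(x_i,x_j)$ for all $0\le i<j\le n$.

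\textbf{Step 2 (the angle bound).} I apply \eqref{lcos} to the (possibly degenerate) triangle with vertices $x_0,x_1,x_n$:
\[\cosh d(x_1,x_n)=\cosh d(x_0,x_1)\cosh d(x_0,x_n)-\sinh d(x_0,x_1)\sinh d(x_0,x_n)\cos\angle_{x_0}(x_1,x_n).\]
By Step 1 we have $d(x_1,x_n)\le d(x_0,x_n)$, hence $\cosh d(x_1,x_n)\le\cosh d(x_0,x_n)\le\cosh d(x_0,x_1)\cosh d(x_0,x_n)$; comparing with the displayed identity forces $\cos\angle_{x_0}(x_1,x_n)\ge0$, i.e. $\angle_{x_0}(x_1,x_n)\le\pi/2$.

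The only place requiring genuine work is the base case of Step 1 — that a polygon inscribed in a hypercycle attains its diameter between its extreme vertices — and this is precisely the Saccheri-quadrilateral identity of Corollary \ref{saco} combined with monotonicity along the curve $t\mapsto te^{\mathbf i\varphi}$; everything else is bookkeeping already carried out in the proofs of Lemma \ref{comp3}, Lemma \ref{lemachoro} and Proposition \ref{goodpos}, and Step 2 is then immediate.
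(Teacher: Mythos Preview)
Your proposal is correct and matches the paper's intended argument: the corollary is stated there as ``an immediate corollary of the previous proofs,'' and what you have written is precisely the unpacking of that claim --- propagating the endpoint-diameter property from the $\varphi$-distorted chain through the deformation sequence of Proposition~\ref{goodpos} via the monotonicity of $t_{0,i,j,n}$ in Lemma~\ref{lemachoro}, and then reading off the angle bound from the law of cosines exactly as is done inside the proof of Lemma~\ref{lemachoro}. Your explicit verification of the base case via Corollary~\ref{saco} is the only detail not already spelled out in the paper, and it is handled correctly (the relevant point being only that $d(y_i,y_j)$ is strictly increasing in $t_j/t_i$, which holds regardless of which side the $\csc\varphi$ sits on).
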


\begin{coro}\label{congood}
Every sub-chain of a convex good chain is good.
\end{coro}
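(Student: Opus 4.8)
I read a \emph{sub-chain} here as a run of consecutive points $x_i,x_{i+1},\dots,x_j$ of the original chain; this is the reading under which the statement is true, since for index-skipping sub-chains no single good pair need survive — already for a canonical chain lying on a hypercycle $t\mapsto te^{\mathbf{i}\varphi}$, deleting a long interior block forces the new Gromov product at the surviving neighbour up towards $\tfrac{a}{2}-\tfrac12\log\lambda$, and near the boundary of the good region no admissible pair can absorb this. For consecutive sub-chains the proof is essentially bookkeeping, and I would check separately that convexity and the defining inequalities \eqref{goodc} are inherited.

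For convexity: by Definition~\ref{condef} the convex hull of $x_0,\dots,x_n$ is the polygon $P$ with vertices $x_0,\dots,x_n$ in this cyclic order, so each $x_l$ is an extreme point of $P$, and the geodesic through $x_i$ and $x_j$ meets $\partial P$ in no point other than $x_i,x_j$. Letting $H$ be the closed half-plane bounded by that geodesic and containing $x_{i+1},\dots,x_{j-1}$, the convex set $P\cap H$ therefore has exactly $x_i,x_{i+1},\dots,x_j$ as its vertices; hence $P\cap H=\mathrm{conv}\{x_i,\dots,x_j\}$ and its boundary is the polygon with sides $x_ix_{i+1},\dots,x_{j-1}x_j,x_jx_i$, i.e.\ $x_i,\dots,x_j$ is a convex chain. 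For the inequalities: taking a good pair $(a,b)$ witnessing that $x_0,\dots,x_n$ is good, the requirements $d(x_{l+1},x_l)\ge a$ for $i\le l\le j-1$ and $\groprod{x_{l-1}}{x_{l+1}}{x_l}\le b$ for $i<l<j$ form a sub-collection of the inequalities already satisfied by the whole chain, so $x_i,\dots,x_j$ is $(a,b)$-good and in particular good.

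The only step needing an actual argument is the convexity claim, which is just the elementary fact that a consecutive block of vertices of a convex polygon spans a convex polygon (a cap of $P$); so I do not anticipate any genuine obstacle, and everything else is immediate from the definitions of convex chain and of $(a,b)$-good chain.
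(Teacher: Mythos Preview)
Your reading of ``sub-chain'' as a consecutive run $x_i,\dots,x_j$ is not the paper's. The paper's own proof treats the triple $x_0,x_k,x_n$ for an arbitrary interior index $k$, and the only place the corollary is invoked (the induction step of Theorem~\ref{catcom}) it is applied to $x_0,x_2,x_3,\dots,x_n$, which omits $x_1$. So the paper intends and needs index-skipping sub-chains; what you prove --- that a consecutive block of an $(a,b)$-good convex chain is again $(a,b)$-good and convex --- is correct, but it is a triviality and not the corollary's content.

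That said, your heuristic counterexample is essentially right and points to a genuine gap in the paper's argument. On the canonical $(a,b)$-chain $x_j=\lambda^{j} e^{\mathbf{i}\varphi}$ with $(a,b)$ close to the boundary $\sinh(a-b)=2\sinh(a/2)$ (so $\lambda$ is only slightly larger than $1$), the triple $x_0,x_1,x_n$ keeps minimum side length exactly $a$ while $\groprod{x_0}{x_n}{x_1}$ climbs towards $\tfrac{a-\log\lambda}{2}$; already for $(a,b)=(2,0.4)$ and $n=3$ one computes $d(x_1,x_3)=3.2$, $d(x_0,x_3)\approx 4.0$, hence $\groprod{x_0}{x_3}{x_1}\approx 0.6$, and then $\sinh(a'-0.6)<2\sinh(a'/2)$ for every $0<a'\le 2$, so no good pair whatsoever exists for $x_0,x_1,x_3$. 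The paper's proof ends with ``hence $x_0,x_k,x_n$ is $\varphi$-good by Lemma~\ref{comp3}'', but that lemma only gives the forward implication (from $\varphi$-good to the angle comparison with the distorted chain), not the converse being invoked. What the paper's argument actually establishes is the angle inequality $\angle_{x_k}(x_0,x_n)\ge\angle_{z_k}(z_0,z_n)$ against the $\varphi$-distorted triple; that weaker conclusion is what feeds into Lemma~\ref{lemachoro}, so the overall strategy may well be repairable with a reworded hypothesis, but the corollary as literally stated does fail for index-skipping sub-chains, and your consecutive-block proof does not address what the paper claims or uses.
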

\begin{proof}
For $x_0,\dots,x_n$ convex and $\varphi$-good, it is enough to show that $x_0,x_k,x_n$ is $\varphi$-good for arbitrary $1\leq k\leq n-1$.  If $y_0,\dots,y_n$ is the $\varphi$-distorted chain for $x_0,\dots, x_n$, Lemmas \ref{comp3} and \ref{lemachoro} imply $\angle_{x_k}(x_0,x_n)\geq \angle_{y_k}(y_0,y_n)$, $d(y_0,y_k) \leq d(x_0,x_k)$ and $d(y_k,y_n)\leq d(x_k,x_n)$. This means that the chain $z_0,z_k,z_n$ contained in a curve with constant geodesic curvature $\cos(\varphi)$ and with $d(z_k,z_j)=d(x_k,x_j)$ for $j=0,n$ satisfies $\angle_{z_k}(z_0,z_n)\leq \angle_{y_k}(y_0,y_n) \leq \angle_{x_k}(x_0,x_n)$ and hence $x_0,x_k,x_n$ is $\varphi$-good by Lemma \ref{comp3}.
\end{proof}

\section{Proof of the Avalanche Principle: convex case}\label{APCC}
For $\lambda_1,\lambda_2,\dots,\lambda_n>1$ and $0<\varphi\leq \pi/2$, consider the chain $\mathfrak{x}(\lambda_1,\dots,\lambda_n;\varphi)=x_0,x_1,\dots,x_n$ given by $x_0=e^{\mathbf{i}\varphi}$, and $x_j=\lambda_1\cdots\lambda_j\cdot x_0$ for $j\geq 1$. Every $\varphi$-distorted chain can be considered of this form.
\begin{prop}\label{varcte} The function $\varphi \mapsto \tau(\mathfrak{x}(\lambda_1,\dots,\lambda_n;\varphi))$ is non-increasing for $0 <\varphi\leq \pi/2$.
\end{prop}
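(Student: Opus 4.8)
The plan is to differentiate $\tau(\mathfrak{x}(\lambda_1,\dots,\lambda_n;\varphi))$ with respect to $\varphi$ and show this derivative is $\le 0$. Write $\mathfrak{x}(\varphi) = x_0(\varphi),\dots,x_n(\varphi)$ with $x_j(\varphi) = \lambda_1\cdots\lambda_j \cdot e^{\mathbf{i}\varphi}$, so the chain lies on the curve $t\mapsto t e^{\mathbf{i}\varphi}$ of constant geodesic curvature $\cos\varphi$. The key observation is that the consecutive distances $d(x_{j-1}(\varphi),x_j(\varphi))$ are \emph{independent} of $\varphi$: by Corollary \ref{saco}, $\sinh\!\big(\tfrac{d(x_{j-1},x_j)}{2}\big) = \csc(\varphi)\sinh\!\big(\tfrac{d(e^{\mathbf{i}\varphi},\lambda_j e^{\mathbf{i}\varphi})}{2}\big)$... wait — rather, the point is that $d(x_{j-1},x_j) = d(e^{\mathbf{i}\varphi},\lambda_j e^{\mathbf{i}\varphi})$ and by Corollary \ref{saco} this equals a fixed function of $\lambda_j$ alone (via $\sinh(\log\lambda_j/2) = \csc\varphi\sinh(d/2)$, so $d$ depends on $\varphi$)... so I must be careful here: in fact the natural invariant as $\varphi$ varies is the translation length $\log\lambda_j$, not the hyperbolic distance. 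So the correct normalization is that $\mathfrak{x}(\lambda_1,\dots,\lambda_n;\varphi)$ is obtained by applying the isometries $z\mapsto \lambda_j z$, and the quantities $\log\lambda_j$ are what stay fixed, while the side lengths $d(x_{j-1},x_j)$ vary with $\varphi$.

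Given this, the strategy is: express $\tau(\mathfrak{x}(\varphi))$ using the law of cosines \eqref{lcos} applied to triangles with vertices at points $x_i(\varphi)$, then differentiate implicitly in $\varphi$. Since all $x_j$ lie on a geodesic translate $z\mapsto \lambda z$ scaled picture, it is cleanest to use the Saccheri-quadrilateral identities: the four points $\mathbf{i}, e^{\mathbf{i}\varphi}, \lambda e^{\mathbf{i}\varphi}, \lambda\mathbf{i}$ form a Saccheri quadrilateral, and more generally for indices $i<j$ the points $\mathbf{i}, x_i, x_j, (\lambda_1\cdots\lambda_j / \lambda_1\cdots\lambda_i \text{-translate})$ relate $d(x_i,x_j)$ to $\csc\varphi$ and $\log(\lambda_{i+1}\cdots\lambda_j)$ via Corollary \ref{saco}. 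Concretely, for every pair $i<j$ one has $\sinh\!\big(\tfrac12 \log(\lambda_{i+1}\cdots\lambda_j)\big) = \csc\varphi \cdot \sinh\!\big(\tfrac12 d(x_i(\varphi),x_j(\varphi))\big)$, so setting $\Lambda_{i,j} = \log(\lambda_{i+1}\cdots\lambda_j)$ (a constant in $\varphi$) and $D_{i,j}(\varphi) = d(x_i(\varphi),x_j(\varphi))$, we get $\sinh(\Lambda_{i,j}/2) = \csc\varphi\,\sinh(D_{i,j}/2)$. Differentiating: $0 = -\csc\varphi\cot\varphi \sinh(D_{i,j}/2) + \csc\varphi \cdot \tfrac12\cosh(D_{i,j}/2) D_{i,j}'$, hence $D_{i,j}' = 2\cot\varphi \tanh(D_{i,j}/2)$. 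Then
$$\tau' = \sum_{j=1}^{n-1} D_{j-1,j+1}' - \sum_{j=2}^{n-1} D_{j-1,j}' - D_{0,n}' = 2\cot\varphi\Big[\sum_{j=1}^{n-1}\tanh(D_{j-1,j+1}/2) - \sum_{j=2}^{n-1}\tanh(D_{j-1,j}/2) - \tanh(D_{0,n}/2)\Big].$$

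Since $0<\varphi\le\pi/2$ gives $\cot\varphi\ge 0$, it remains to show the bracket is $\le 0$, i.e. $\sum_{j=1}^{n-1}\tanh(D_{j-1,j+1}/2) \le \tanh(D_{0,n}/2) + \sum_{j=2}^{n-1}\tanh(D_{j-1,j}/2)$. This is a "tension-type" inequality with $\tanh(\cdot/2)$ in place of the identity, and I expect it to be the main obstacle. The plan to handle it is to note that the $D_{i,j}$ are not arbitrary: they are the pairwise distances of $n+1$ collinear-on-a-curve points, and by Corollary \ref{saco} they are all determined by the single parameter $\csc\varphi$ together with the fixed $\Lambda_{i,j}$'s, which satisfy $\Lambda_{i,l} = \Lambda_{i,j} + \Lambda_{j,l}$ for $i\le j\le l$. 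Using $\sinh(D_{i,j}/2) = \sin\varphi \sinh(\Lambda_{i,j}/2)$ one can rewrite each $\tanh(D_{i,j}/2)$ purely in terms of $\Lambda_{i,j}$ and $\varphi$, reducing the bracket to an inequality among real numbers $t_j := \Lambda_{j-1,j} \ge \log\lambda_j > 0$ with cumulative sums; I would then prove it by induction on $n$, the base case $n=2$ being $\tanh(D_{0,2}/2) \le \tanh(D_{0,1}/2) + \tanh(D_{1,2}/2)$ (subadditivity-type, following from concavity of $\tanh$ on $[0,\infty)$ combined with the superadditivity relation $\sinh$ satisfies), and the inductive step splitting the chain at an interior vertex and invoking Corollary \ref{agu} to control which distance is largest. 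An alternative cleaner route, if the induction is messy, is to observe that $\mathfrak{x}(\varphi)$ for $\varphi = \pi/2$ is the "flattest" case (the chain lies on the vertical geodesic through $\mathbf{i}$, hence $\tau = 0$), and for $\varphi$ small the chain hugs the curve of small curvature angle; monotonicity then should follow from a convexity argument for the function $\varphi\mapsto D_{i,j}(\varphi)$ and a rearrangement, but I expect the direct derivative computation above combined with the inductive proof of the bracket inequality to be the most robust path.
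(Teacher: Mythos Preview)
Your derivative computation is exactly the paper's: after the change of variable $t=\csc\varphi$ the paper gets $(d_{i,j})'=2t^{-1}\tanh(d_{i,j}/2)$, which is your $D_{i,j}'=2\cot\varphi\,\tanh(D_{i,j}/2)$ up to the Jacobian. So the reduction to the bracket inequality
\[
\sum_{j=1}^{n-1}\tanh(D_{j-1,j+1}/2)\;\le\;\tanh(D_{0,n}/2)+\sum_{j=2}^{n-1}\tanh(D_{j-1,j}/2)
\]
is correct and is exactly where the paper lands.

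The gap is in how you propose to prove the bracket. Your stated base case $n=2$ is vacuous (both sides are $\tanh(D_{0,2}/2)$), so all the content sits at $n=3$; and for the inductive step you have the right instinct (``split the chain at an interior vertex''), but you should observe that this splitting is not a vague induction but an \emph{identity}: the paper uses
\[
\tau(\mathfrak{x}(\lambda_1,\dots,\lambda_n;\varphi))=\tau(\mathfrak{x}(\lambda_1,\dots,\lambda_{n-1};\varphi))+\tau(\mathfrak{x}(\lambda_1\cdots\lambda_{n-2},\lambda_{n-1},\lambda_n;\varphi)),
\]
which reduces the monotonicity for general $n$ to the single case $n=3$. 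What remains is then to show $\tanh(D_{0,2}/2)+\tanh(D_{1,3}/2)\le \tanh(D_{1,2}/2)+\tanh(D_{0,3}/2)$, and this is the step you have not supplied. The paper isolates it as a separate lemma (Lemma~\ref{lematanh}): if $x,y,z,w\ge 0$ with $\min=x$, $\max=y$, and $x+y\le z+w$, then $\tanh x+\tanh y\le\tanh z+\tanh w$; one then applies it with $x=D_{1,2}/2$, $y=D_{0,3}/2$, $z=D_{0,2}/2$, $w=D_{1,3}/2$, using $D_{0,2}+D_{1,3}\ge D_{1,2}+D_{0,3}$ (nonnegativity of tension for a convex chain) and $D_{0,3}\ge D_{0,2},D_{1,3}\ge D_{1,2}$ (Corollary~\ref{agu}). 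Your reference to Corollary~\ref{agu} is on target for identifying the largest side, but the concavity/subadditivity remark you make for $n=2$ does not by itself give this four-term rearrangement; you still need the small computation $\cosh z\cosh w\le\cosh x\cosh y$ under those hypotheses, which is the actual content of the lemma.
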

We need a lemma:
\begin{lema}\label{lematanh} If $x,y,z,w \geq0 $ satisfy $\min(x,y,z,w)=x$, $\max(x,y,z,w)=y$, and $x+y\leq z+w$, then:
\begin{equation*}
\tanh(x)+\tanh(y)\leq \tanh(z)+\tanh(w).
\end{equation*}
\end{lema}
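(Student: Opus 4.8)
The plan is to prove the inequality in two stages: first reduce to the case $x+y=z+w$, and then dispatch that case by a short integral estimate exploiting the concavity of $\tanh$ on $[0,\infty)$.

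For the reduction, observe that since $\tanh$ is increasing, enlarging $y$ only increases the left-hand side, so it is enough to treat the extremal situation $x+y=z+w$. Precisely, if $x+y<z+w$ put $\tilde y=z+w-x$; then $\tilde y\ge y$, and because $z,w\ge x$ one checks that $x=\min(x,\tilde y,z,w)$ and $\tilde y=\max(x,\tilde y,z,w)$, so the hypotheses of the lemma still hold with $y$ replaced by $\tilde y$, while $\tanh(x)+\tanh(y)\le\tanh(x)+\tanh(\tilde y)$. Hence I may assume $x+y=z+w$, and, since the statement is symmetric in $z$ and $w$, that $x\le z\le w\le y$.

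Now set $\delta=z-x\ge 0$; the relation $x+y=z+w$ forces $\delta=y-w$ as well, and the ordering $x\le z\le w\le y$ gives $x\le w=y-\delta$. The inequality to prove, $\tanh(x)+\tanh(y)\le\tanh(x+\delta)+\tanh(y-\delta)$, is equivalent to
\[
\tanh(x+\delta)-\tanh(x)\;\ge\;\tanh(y)-\tanh(y-\delta),
\quad\text{i.e.}\quad
\int_0^{\delta}\tanh'(x+t)\,dt\;\ge\;\int_0^{\delta}\tanh'(y-\delta+t)\,dt .
\]
Since $\tanh'(s)=1/\cosh^2(s)$ is non-increasing in $|s|$ and $0\le x+t\le (y-\delta)+t$ for every $t\in[0,\delta]$ (using $x\le y-\delta$), the integrands satisfy $\tanh'(x+t)\ge\tanh'(y-\delta+t)$ pointwise, and integrating over $[0,\delta]$ concludes the proof.

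The computation is routine; the only genuine subtlety is that the hypothesis compares the a priori unequal sums $x+y$ and $z+w$, which blocks an immediate one-line appeal to concavity (or to Karamata's majorization inequality applied to the concave function $\tanh|_{[0,\infty)}$). The monotonicity step in the first paragraph is exactly what removes this obstruction, after which the pointwise comparison of derivatives finishes things off.
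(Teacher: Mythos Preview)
Your proof is correct. Both your argument and the paper's first reduce to the equal-sum case via monotonicity of $\tanh$ (you enlarge $y$ to $\tilde y=z+w-x$; the paper enlarges $x$ to $x'=z+w-y$; either works, and the verification that the min/max conditions persist is the same in both). For the equal-sum case the two arguments diverge: you use the integral representation and the fact that $\tanh'(s)=1/\cosh^2(s)$ is decreasing on $[0,\infty)$---in effect, the two-variable case of Karamata's inequality for the concave function $\tanh|_{[0,\infty)}$---while the paper writes
\[
\tanh(u)+\tanh(v)=\frac{\sinh(u+v)}{\cosh(u)\cosh(v)}
\]
and compares the denominators via the product-to-sum identity $2\cosh(u)\cosh(v)=\cosh(u+v)+\cosh(u-v)$ together with the observation $|w-z|\le |y-x'|$. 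Your route is slightly more conceptual and makes transparent why the result is really a concavity statement; the paper's is a direct hyperbolic computation in keeping with the rest of the section. Both are short and elementary.
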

\begin{proof} The $\tanh$ function is increasing, so it is enough to prove the result replacing  $x$ by $x'=z+w-y$ under the assumptions $z \leq w \leq y$ and $x'\leq y$. In this case we have
\begin{alignat*}{2}
\cosh(z)\cosh(w) & =2[\cosh(z+w)+\cosh(w-z)]\\ & \leq 2[\cosh(z+w)+\cosh(w-z+2(y-w))] \\ 
&= 2[\cosh(x'+y)+\cosh(y-x')] \\ 
&=\cosh(x')\cosh(y), \\
\end{alignat*}
and we conclude
\begin{equation*}
\tanh(x')+\tanh(y)=\frac{\sinh(x'+y)}{\cosh(x')\cosh(y)}\leq \frac{\sinh(z+w)}{\cosh(z)\cosh(w)}=\tanh(z)+\tanh(w). \qedhere
\end{equation*}
\end{proof}
\begin{proof}[Proof of Proposition \ref{varcte}]
Since $$\tau(\mathfrak{x}(\lambda_1,\dots,\lambda_n;\varphi))=\tau(\mathfrak{x}(\lambda_1,\dots,\lambda_{n-1};\varphi))+\tau(\mathfrak{x}(\lambda_1\cdots\lambda_{n-2},\lambda_{n-1},\lambda_n;\varphi)),$$ it is enough to show that $g(t)=\tau(\mathfrak{x}(a,b,c;\textup{arccsc}(t)))$ is non-decreasing for $t\geq 1$, which is the same as $g'(t)\geq 0$.

For $0\leq i<j\leq 3$, let $d_{i,j}=d(x_i,x_j)$ and $s_{i,j}=d(\lambda_{1}\cdots \lambda_i,\lambda_{1}\cdots \lambda_j)$. By Corollary \ref{saco} we have $\sinh(d_{i,j}/2)=t\cdot\sinh(s_{i,j}/2)$ and hence $(d_{i,j})':=\frac{\partial d_{i,j}}{\partial t}=2\frac{\sinh(s_{i,j}/2)}{\cosh(d_{i,j}/2)}=2t^{-1}\tanh(d_{i,j}/2)$. We obtain 
$g'(t)=(d_{0,2})'+(d_{1,3})'-(d_{1,2})'-(d_{0,3})'=2t^{-1}(\tanh(d_{0,2}/2)+\tanh(d_{1,3}/2)-\tanh(d_{1,2}/2)-\tanh(d_{0,3}/2))$, and since $d_{0,2}+d_{1,3}\geq d_{1,2}+d_{0,3}$ and $d_{0,3} \geq d_{0,2},d_{1,3} \geq d_{1,2}$, Lemma \ref{lematanh} applies, concluding $g'(t)\geq 0$.
\end{proof}
\begin{proof}[Proof of Theorem \ref{AP} (convex case)]
Let $x_0,\dots,x_n \in \Hy^2$ be a convex $(a,b)$-good pair. By Proposition \ref{goodpos} we may assume $x_0,\dots,x_n=\mathfrak{x}(\lambda_1,\dots,\lambda_n;\varphi)$ where $\varphi$ is the curvature angle of $(a,b)$ and
\begin{equation}
    \sinh(\log(\lambda_i)/2)=\sin(\varphi)\sinh(d(x_i,x_{i-1})/2). \label{lambdas}
\end{equation}
Also, by Proposition \ref{varcte} we have 
\begin{equation*}
\tau(x_0,\dots,x_n)\leq \tau(\mathfrak{x}(\lambda_1,\dots,\lambda_n;0)):=\displaystyle\lim_{\alpha\to 0^{+}}{\tau(\mathfrak{x}(\lambda_1,\dots,\lambda_n;\alpha))}.
\end{equation*}If we define $g_{\alpha}(x,y)=|y-x|+\sqrt{(y-x)^2+4xy\sin(\alpha)}$, $t_0=1$ and $t_j=\lambda_1\cdots \lambda_j$ for $1\leq j \leq n$, by using \eqref{disthip} we obtain the formula 
\small{\begin{align*}
    \tau(\mathfrak{x}(\lambda_1,\dots,\lambda_n;\alpha)) & =2\log{\left(\frac{\sqrt{t_0t_n}\sin(\alpha)}{g_{\alpha}(t_0,t_n)}\left(\prod_{j=0}^{n-2}{\frac{g_{\alpha}(t_j,t_{j+2})}{\sqrt{t_jt_{j+2}}\sin(\alpha)}}\right) \left(\prod_{j=1}^{n-2}{\frac{\sqrt{t_jt_{j+1}}\sin(\alpha)}{g_{\alpha}(t_j,t_{j+1})}}\right) \right)} \\
    & = 2\log{\left(\frac{\prod_{j=0}^{n-2}{g_{\alpha}(t_j,t_{j+2})}}{g_{\alpha}(t_0,t_n)\prod_{j=1}^{n-2}{g_{\alpha}(t_j,t_{j+1})}}\right)}
\end{align*}}
\normalsize
Also, the identity $g_{0}(x,y)=2|y-x|$ implies
\begin{align*}
    \tau(\mathfrak{x}(\lambda_1,\dots,\lambda_n;0)) & = 2\log{\left(\frac{(\lambda_1\lambda_2-1)(\lambda_2\lambda_3-1)(\lambda_3\lambda_4-1)\cdots (\lambda_{n-1}\lambda_{n}-1)}{(\lambda_1\cdots\lambda_n-1)(\lambda_2-1)(\lambda_3-1)\cdots(\lambda_{n-1}-1)}\right)}.
\end{align*}
Since $\lambda_{j}>1$ for each $j$, we also have the inequality 
\small{\begin{align*}
    (\lambda_1\lambda_2-1)(\lambda_2\lambda_3-1)\cdots(\lambda_{n-1}\lambda_n-1) & =\frac{(\lambda_1\cdots\lambda_n-\lambda_3\cdots\lambda_n)(\lambda_2\lambda_3-1)\cdots(\lambda_{n-1}\lambda_n-1)}{(\lambda_3\cdots\lambda_{n-1}\lambda_n)} \\
    & \leq \frac{(\lambda_1\cdots\lambda_n-1)(\lambda_2\lambda_3)\cdots(\lambda_{n-1}\lambda_n)}{(\lambda_3\cdots\lambda_{n-1}\lambda_n)} \\
    & =(\lambda_1\cdots\lambda_n-1)(\lambda_2\lambda_3\cdots \lambda_{n-1})\\
\end{align*}}
\normalsize implying
\begin{align*}
    \tau(\mathfrak{x}(\lambda_1,\dots,\lambda_n;0))&  \leq 2\log{\left(\frac{\lambda_2}{(\lambda_2-1)}\cdot \frac{\lambda_3}{(\lambda_3-1)}\cdots \frac{\lambda_{n-1}}{(\lambda_{n-1}-1)}\right)}\\
    & =2\sum_{j=2}^{n-1}{\log{\left(\frac{1}{\lambda_j-1}+1 \right)}} \leq 2\sum_{j=2}^{n-1}{\frac{1}{\lambda_j-1}}.
    \end{align*}
It only remains to note that $\lambda_j \geq \lambda$ for $2\leq j\leq n-1$, which follows from \eqref{parameters} and \eqref{lambdas} since
\begin{equation*}
\sinh(\log(\lambda_i)/2)  =\sin(\varphi)\sinh(d(x_i,x_{i-1})/2)
  \geq \sin(\varphi)\sinh(a/2)  = \sinh(\log(\lambda)/2).
\end{equation*}
The proof is complete in this case.
\end{proof}
\section{Avalanche Principle for $\textup{CAT}(-1)$ spaces}\label{sAPCAT}
In this section we define $\CAT{-1}$ spaces and prove Theorem \ref{catcom}. For three distinct points $x,y,z$ in a geodesic metric space $X$, a geodesic triangle with vertices $x,y,z$ will be denoted by $\triangle(x,y,z)$. For such a triangle, a \emph{comparison triangle} will be a geodesic triangle $\overline{\triangle(x,y,z)}=\triangle(\overline{x},\overline{y},\overline{z})\subset \Hy^{2}$, with $d(p,q)=d(\overline{p},\overline{q})$ for $p,q=x,y,z$. If $p$ belongs to a side of $\triangle(x,y,z)$, say at $xy$, the \emph{comparison} point of $p$ is the unique point $\overline{p}$ in the side $\overline{x}\overline{y}$ of $\overline{\triangle(x,y,z)}$ satisfying $d(p,x)=d(\overline{p},\overline{x})$. 

\begin{defi}The metric space $X$ is a $\textup{CAT}(-1)$ space if it is geodesic, and for every geodesic triangle $\triangle(x,y,z)$ in $X$ and every pair of points $p,q$ in sides of $\triangle(x,y,z)$, the corresponding comparison points $\overline{p},\overline{q}$ in $\triangle(\overline{x},\overline{y},\overline{z})\subset \Hy^{2}$ satisfy $d(p,q)\leq d(\overline{p},\overline{q})$.
\end{defi}

We have a characterization of $\textup{CAT}(-1)$ spaces in terms of the law of cosines. For $a,b>0$ and $0\leq\gamma\leq\pi$, let $$L(a,b,\gamma)=\arccosh(\cosh(a)\cosh(b)-\sinh(a)\sinh(b)\cos(\gamma)).$$ The law of cosines implies $c=L(a,b,\gamma)$ if and only if a geodesic triangle in $\Hy^{2}$ with sides $a,b,c$ has the property that the angle corresponding to $c$ equals $\gamma$. Clearly $L$ is symmetric in the first two variables and increasing in the third variable, and the identity
\begin{equation*}
    \cosh(L(a,b,\gamma))=\cosh(a-b)+2\sinh(a)\sinh(b)\sin^2(\gamma/2)
\end{equation*} implies that for $b$ and $\gamma$ fixed, $L$ is increasing in the first variable while $a\geq b$ or $\gamma \geq \pi/2$. With this notation, a metric space $X$ is $\textup{CAT}(-1)$ if it is geodesic, and for every geodesic triangle $\triangle(x,y,z)$ with $a=d(y,z), b=d(z,x),c=(x,y)$ and $\angle_{z}(x,y)=\gamma$, we have
\begin{equation*}
c\geq L(a,b,\gamma),
\end{equation*}
where $\angle_{z}(x,y)$ denotes Alexandrov angle (see \cite[Ch.~II, Prop.~1.7]{bri}).

\begin{defi}\label{compdef} For a chain $x_0,\dots,x_n$ in a metric space $X$, its \emph{comparison chain} is the essentially unique convex chain $\ov{x}_0,\dots,\ov{x}_n\in \Hy^{2}$ so that  $\ov{\triangle(x_{j-1},x_{j},x_{j+1})}=\triangle(\ov{x}_{j-1},\ov{x}_j,\ov{x}_{j+1})$ for $1\leq j\leq n-1$.  
\end{defi}

We begin the proof of Theorem \ref{catcom} with a lemma relating convex and non-convex chains in $\Hy^{2}$.
\begin{lema}\label{crece}Let $x_0,x_1,x_2$ be a good chain in $\Hy^2$, with $a=d(x_0,x_1)$, $b=d(x_1,x_2)$, and fix $e>b$. For $0<\gamma<\pi$, let $x_{3}(\gamma)$ the unique point in the same half-plane of $x_0$ w.r.t.~the geodesic determined by $x_1x_2$, such that $\angle_{x_1}(x_2,x_3(\gamma))=\gamma$ and $d(x_1,x_3(\gamma))=e$. Let $y_{3}(\gamma)$ be the reflection of $x_{3}(\gamma)$ with respect to the geodesic containing $x_1x_2$, and let $0<u<\pi$ be such that $x_0,x_1,x_2,x_3(\gamma)$ is a good convex chain for all $0<\gamma<u$. Then the map $\gamma \mapsto \tau(x_0,x_1,x_2,x_3(\gamma))+\tau(x_0,x_1,x_2,y_3(\gamma))$ is non-decreasing for $0<\gamma<u$.
\end{lema}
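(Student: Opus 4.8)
The plan is to recast the claim as a monotonicity statement for a single distance function and then differentiate, in the spirit of Lemmas~\ref{lemachoro} and \ref{varcte}. Let $\ell$ be the geodesic through $x_1$ and $x_2$. The reflection of $\Hy^{2}$ across $\ell$ fixes $x_1$ and $x_2$ and interchanges $x_3(\gamma)$ and $y_3(\gamma)$, so $d(x_1,y_3(\gamma))=d(x_1,x_3(\gamma))=e$. Expanding the tension of a four-point chain as $\tau(x_0,x_1,x_2,p)=d(x_0,x_2)+d(x_1,p)-d(x_1,x_2)-d(x_0,p)$ gives
\[
\tau(x_0,x_1,x_2,x_3(\gamma))+\tau(x_0,x_1,x_2,y_3(\gamma))=2\bigl(d(x_0,x_2)+e-d(x_1,x_2)\bigr)-h(\gamma),
\]
where $h(\gamma):=d(x_0,x_3(\gamma))+d(x_0,y_3(\gamma))$. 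Hence it suffices to show that $h$ is non-increasing on $(0,u)$.

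Write $a=d(x_0,x_1)$ and $\beta_0=\angle_{x_1}(x_0,x_2)$. Since $x_3(\gamma)$ makes angle $\gamma$ at $x_1$ with $x_1x_2$ on the side of $x_0$, convexity of $x_0,x_1,x_2,x_3(\gamma)$ (which puts $x_2$ and $x_3(\gamma)$ on the same side of the geodesic through $x_0,x_1$) forces $\gamma<\beta_0$; in particular $u\le\beta_0$, the angle of $\triangle(x_0,x_1,x_3(\gamma))$ at $x_1$ equals $\beta_0-\gamma$, and the angle of $\triangle(x_0,x_1,y_3(\gamma))$ at $x_1$ has cosine $\cos(\beta_0+\gamma)$. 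Also, since $x_0,x_1,x_2,x_3(\gamma)$ is a \emph{good} convex chain, Corollary~\ref{agu} gives $d(x_0,x_3(\gamma))\ge\max(a,e)$. By \eqref{lcos},
\[
\cosh d(x_0,x_3(\gamma))=\cosh a\cosh e-\sinh a\sinh e\cos(\beta_0-\gamma),\qquad \cosh d(x_0,y_3(\gamma))=\cosh a\cosh e-\sinh a\sinh e\cos(\beta_0+\gamma),
\]
and differentiating in $\gamma$ (writing $F=d(x_0,x_3(\gamma))$, $G=d(x_0,y_3(\gamma))$) yields
\[
h'(\gamma)=\sinh a\sinh e\left(\frac{\sin(\beta_0+\gamma)}{\sinh G}-\frac{\sin(\beta_0-\gamma)}{\sinh F}\right).
\]

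So I must check $\dfrac{\sin(\beta_0+\gamma)}{\sinh G}\le\dfrac{\sin(\beta_0-\gamma)}{\sinh F}$ for $\gamma\in(0,u)$. As $\beta_0-\gamma\in(0,\pi)$ the right side is positive, so the inequality is immediate when $\beta_0+\gamma\ge\pi$. When $\beta_0+\gamma<\pi$ both sides are positive; squaring, substituting the two $\cosh$ identities, and simplifying with $\sin^{2}+\cos^{2}=1$ (factoring out $\cos(\beta_0-\gamma)-\cos(\beta_0+\gamma)=2\sin\beta_0\sin\gamma>0$), the inequality becomes
\[
\bigl(\cosh^{2}a\cosh^{2}e+\sinh^{2}a\sinh^{2}e-1\bigr)\cos\beta_0\cos\gamma\ \le\ \cosh a\cosh e\sinh a\sinh e\,(\cos^{2}\beta_0+\cos^{2}\gamma).
\]
This is trivial if $\cos\beta_0\le 0$. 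If $\cos\beta_0>0$ then $0<\gamma<\beta_0<\pi/2$; viewing the last inequality as a quadratic in $\cos\beta_0$ — its two roots are $x_\pm\cos\gamma$ with $x_-=\tanh(\min(a,e))/\tanh(\max(a,e))\le 1$ and $x_+=1/x_-\ge 1$, as one sees by using $\cosh(a\mp e)=\cosh a\cosh e\mp\sinh a\sinh e$ to evaluate the discriminant — and since $\cos\beta_0<\cos\gamma\le x_+\cos\gamma$, it reduces to $\cos\beta_0\le x_-\cos\gamma$. Finally $\cos\beta_0\le\cos\gamma\cos(\beta_0-\gamma)$ because this is equivalent to $\tan\gamma\le\tan\beta_0$, while $\cos(\beta_0-\gamma)\le\coth(\max(a,e))\tanh(\tfrac12\min(a,e))\le x_-$ because $\cosh a\cosh e-\sinh a\sinh e\cos(\beta_0-\gamma)=\cosh F\ge\cosh(\max(a,e))$; combining the two gives $\cos\beta_0\le x_-\cos\gamma$, so $h'(\gamma)\le 0$.

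The crux — and the only place the full hypothesis ``good convex chain'' enters beyond the elementary angle bound — is that last step: converting the distance inequality $d(x_0,x_3(\gamma))\ge\max\bigl(d(x_0,x_1),d(x_1,x_3(\gamma))\bigr)$ of Corollary~\ref{agu}, via \eqref{lcos}, into precisely the bound $\cos(\beta_0-\gamma)\le x_-$ demanded by the quadratic analysis. The algebraic reduction of the squared inequality to the displayed quadratic, and the evaluation of its roots in closed form in terms of $\tanh$, are routine but somewhat tedious computations I would carry out with care but not dwell on here.
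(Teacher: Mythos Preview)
Your argument is correct. Up to the derivative formula
\[
h'(\gamma)=\sinh a\,\sinh e\left(\frac{\sin(\beta_0+\gamma)}{\sinh G}-\frac{\sin(\beta_0-\gamma)}{\sinh F}\right)
\]
you and the paper proceed identically. After that point the two diverge. The paper applies \eqref{lsin} once more to the triangles $x_0x_1x_3(\gamma)$ and $x_0x_1y_3(\gamma)$, turning the bracket into $\sinh(e)^{-1}\bigl(\sin\angle_{x_0}(x_1,y_3)-\sin\angle_{x_0}(x_1,x_3)\bigr)$, and then invokes Corollary~\ref{agu} in its \emph{angular} form ($\angle_{x_0}(x_1,x_3)\le\pi/2$) together with the geometric observation $\angle_{x_0}(x_1,y_3)\le\angle_{x_0}(x_1,x_3)$; this finishes the proof in two lines. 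You instead square the inequality, reduce it to a quadratic in $\cos\beta_0$ with roots $x_\pm\cos\gamma$, and use Corollary~\ref{agu} in its \emph{metric} form ($F\ge\max(a,e)$) to place $\cos\beta_0$ below the smaller root. Your route is longer and more algebraic, but it is entirely self-contained: it does not rely on the comparison $\angle_{x_0}(x_1,y_3)\le\angle_{x_0}(x_1,x_3)$, whose justification the paper leaves implicit. The paper's route, on the other hand, makes the geometric content transparent and avoids the quadratic analysis altogether.
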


\begin{figure}[hbt]
\begin{tikzpicture}[scale=0.9]
\draw (-2,0) -- (2,0);
\draw (2,0) -- (-3.66,2.03);
\draw (-2,0) -- (3.64,1.33);
\draw (2,0) -- (3.64,-1.33);
\draw (2,0) -- (3.64,1.33);
\draw (-2,0) -- (-3.66,2.03);
\draw[dashed] (-3.66,2.03) -- (3.64,1.33);
\draw[dashed] (-3.66,2.03) -- (3.64,-1.33);
\draw (-3.66,2.03) node[left] {\small{$x_{0}$}}; 
\draw (-3.66,2.03) node{\small{$\bullet$}};
\draw (-2,0) node[left] {\small{$x_{1}$}};  
\draw (-2,0) node{\small{$\bullet$}};
\draw (2,0) node[right] {\small{$x_{2}$}}; 
\draw (2,0) node{\small{$\bullet$}};
\draw (3.64,1.33) node[right] {\small{$x_{3}(\gamma)$}}; 
\draw (3.64,1.33) node{\small{$\bullet$}};
\draw (3.64,-1.33) node[right] {\small{$y_{3}(\gamma)$}};
\draw (3.64,-1.33) node{\small{$\bullet$}};
\draw (0.3,0) arc (0:13.26880:2.3)  ;
\draw[dashed] (3.64,-1.33) arc (-13.26880:13.26880:5.79469)  ;
\draw[dashed] (-1.1,0) arc (0:128.26880:0.9)  ;
\draw (-1.75,0.17) node[above] {\small{$\beta$}};
\draw (-0.3,0.19) node {\small{$\gamma$}};
\draw (-2.83,1.01) node[left] {\small{$a$}};
\draw (0,0) node[below] {\small{$b$}};
\draw (2.82,0.665) node[left] {\small{$c$}};
\draw (0.82,0.665) node[above] {\small{$e$}};
\draw (-0.01,1.63) node[above] {\small{$f$}};
\draw (1.6428,-0.38683) node[below] {\small{$g$}};
\end{tikzpicture}
\caption{Proof of Lemma \ref{crece}.}
\end{figure}
\begin{proof}Let $\beta=\angle_{x_1}(x_0,x_2)$, $c=c(\gamma)=d(x_2,x_3(\gamma))=d(x_2,y_3(\gamma))$, $f=f(\gamma)=d(x_0,x_3(\gamma))$, and $g=g(\gamma)=d(x_0,y_3(\gamma))$. It is enough to show that $f_{\gamma}+g_{\gamma}\leq 0$ for $0<\gamma<u$. We will use the following relations coming from \eqref{lcos}:
\begin{equation*}
\cosh(f)=\cosh(a)\cosh(e)-\sinh(a)\sinh(e)\cos(\beta-\gamma),
\end{equation*}
\begin{equation*}
\cosh(g)=\cosh(a)\cosh(e)-\sinh(a)\sinh(e)\cos(\beta+\gamma).
\end{equation*}
Implicit differentiation gives us
\begin{equation*}
f_{\gamma}\sinh(f)=-\sinh(a)\sinh(e)\sin(\beta-\gamma),
\end{equation*}
\begin{equation*}
g_{\gamma}\sinh(g)=\sinh(a)\sinh(e)\sin(\beta+\gamma),
\end{equation*}
and by \eqref{lsin} applied to the triangles $x_0,x_1,y_3(\gamma)$ and $x_0,x_1,x_3(\gamma)$ respectively, we obtain
\begin{alignat*}{2}
f_{\gamma}+g_{\gamma} & =\sinh(a)\sinh(e)\left(\frac{\sin(\beta+\gamma)}{\sinh(g)}-\frac{\sin(\beta-\gamma)}{\sinh(f)}\right)\\
& = \sinh(a)(\sin(\angle_{x_0}(x_1,y_3(\gamma)))-\sin(\angle_{x_0}(x_1,x_3(\gamma)))).
\end{alignat*}
Since $x_0,x_1,x_2,x_3(\gamma)$ is a good convex chain, by Corollary \ref{agu} we obtain $0\leq \angle_{x_0}(x_1,y_3(\gamma))\leq \angle_{x_0}(x_1,x_3(\gamma)) \leq \pi/2$, and hence $f_{\gamma}+g_{\gamma}\leq 0$, as desired.
\end{proof}
\begin{coro}\label{cororef}Let $x_0,x_1,x_2,x_3$ be a good convex chain in $\Hy^{2}$, and let $y_3$ be the reflection of $x_3$ with respect to the segment $x_1x_2$. Then
\begin{equation}
\tau(x_0,x_1,x_2,x_3)+\tau(x_0,x_1,x_2,y_3)\geq 0.
\end{equation}
\end{coro}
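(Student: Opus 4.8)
The plan is to realise the given chain inside the family of Lemma~\ref{crece} and then let the free parameter degenerate. Write $a=d(x_0,x_1)$, $b=d(x_1,x_2)$, $e=d(x_1,x_3)$ and $\gamma_0=\angle_{x_1}(x_2,x_3)$, so that in the notation of Lemma~\ref{crece} we have $x_3=x_3(\gamma_0)$ and $y_3=y_3(\gamma_0)$ (note that $x_0$ and $x_3$ lie in the same half-plane with respect to the geodesic $x_1x_2$, since $x_0,x_1,x_2,x_3$ is convex). The first thing to check is the standing hypothesis $e>b$ of that lemma: if $(a_0,b_0)$ is a good pair witnessing that $x_0,x_1,x_2,x_3$ is good, then $d(x_2,x_3)\ge a_0$ and $\groprod{x_1}{x_3}{x_2}\le b_0$, while the defining inequality $\sinh(a_0-b_0)>2\sinh(a_0/2)$ forces $a_0-b_0>a_0/2$, i.e.\ $a_0>2b_0$; hence
\begin{equation*}
e-b=d(x_2,x_3)-2\groprod{x_1}{x_3}{x_2}\ge a_0-2b_0>0.
\end{equation*}

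Next I would run the deformation $\gamma\mapsto x_3(\gamma)$ for $\gamma$ decreasing from $\gamma_0$ to $0$. Granting that $x_0,x_1,x_2,x_3(\gamma)$ remains a good convex chain throughout $(0,\gamma_0]$, Lemma~\ref{crece} applies on $(0,\gamma_0)$ and gives that
\begin{equation*}
\gamma\longmapsto \tau(x_0,x_1,x_2,x_3(\gamma))+\tau(x_0,x_1,x_2,y_3(\gamma))
\end{equation*}
is non-decreasing there; by continuity the same holds on $(0,\gamma_0]$. Therefore
\begin{equation*}
\tau(x_0,x_1,x_2,x_3)+\tau(x_0,x_1,x_2,y_3)\ge \lim_{\gamma\to 0^+}\bigl(\tau(x_0,x_1,x_2,x_3(\gamma))+\tau(x_0,x_1,x_2,y_3(\gamma))\bigr).
\end{equation*}
As $\gamma\to 0^+$ the point $x_3(\gamma)$ tends to the point $P$ on the geodesic ray from $x_1$ through $x_2$ with $d(x_1,P)=e$; since $e>b$, the points $x_1,x_2,P$ are colinear with $x_2$ strictly between, so $P$ lies on the geodesic $x_1x_2$ and its reflection $y_3(\gamma)\to P$ as well. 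Consequently the right-hand side equals $2\,\tau(x_0,x_1,x_2,P)$, and using $d(x_1,P)-d(x_1,x_2)=d(x_2,P)$,
\begin{equation*}
\tau(x_0,x_1,x_2,P)=d(x_0,x_2)+d(x_2,P)-d(x_0,P)\ge 0
\end{equation*}
by the triangle inequality. This yields the corollary.

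The main obstacle is the assumption granted above: that $x_0,x_1,x_2,x_3(\gamma)$ stays good and convex all the way down to the degenerate limit, so that Lemma~\ref{crece} really is applicable on $(0,\gamma_0]$. Convexity should be the harmless part --- since $\gamma_0\le\angle_{x_1}(x_0,x_2)$ (the diagonal of a convex quadrilateral lies inside it), for every $\gamma\in(0,\gamma_0]$ the ray $x_1x_3(\gamma)$ stays angularly between $x_1x_2$ and $x_1x_0$ and the circle of radius $e$ about $x_1$ keeps $x_3(\gamma)$ on the far side of the chord $x_0x_2$, so the diagonals of $x_0x_1x_2x_3(\gamma)$ keep crossing. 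Goodness is the delicate point, because the edge $d(x_2,x_3(\gamma))$ shrinks along the deformation; here one controls the Gromov product via $\groprod{x_1}{x_3(\gamma)}{x_2}=\tfrac12\bigl(d(x_2,x_3(\gamma))-(e-b)\bigr)$, which decreases in step with that edge, while $\groprod{x_0}{x_2}{x_1}$ is constant, and then either produces a single good pair valid along the whole deformation or --- should strict goodness be lost before $\gamma=0$ --- argues via Corollary~\ref{agu} that the bounds $\angle_{x_0}(x_1,x_3(\gamma)),\angle_{x_0}(x_1,y_3(\gamma))\le\pi/2$ actually used in the proof of Lemma~\ref{crece} persist for convex deformations of a good convex chain. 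Pinning this down is, I expect, the technical heart of the argument.
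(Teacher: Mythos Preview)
Your approach is exactly the paper's: parametrise by $\gamma=\angle_{x_1}(x_2,x_3)$, invoke Lemma~\ref{crece} to pass to $\gamma\to 0$, and then evaluate the degenerate limit via the triangle inequality for $x_0,x_2,x_3(0)$. The paper's argument is in fact terser than yours --- it neither checks $e>b$ explicitly nor addresses the issue you single out as ``the technical heart'' (that $x_0,x_1,x_2,x_3(\gamma)$ stays good and convex on the whole interval $(0,\gamma_0)$); it simply applies Lemma~\ref{crece} as if $u=\gamma_0$ were automatic, so the concern you raise is one the paper glosses over rather than resolves.
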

\begin{proof}
Consider $x_3=x_3(\gamma)$ as a variable point depending on $\gamma=\angle_{x_1}(x_2,x_3)$, as in the statement of Lemma \ref{crece}. By this lemma we obtain
\begin{equation*}
 \tau(x_0,x_1,x_2,x_3)+\tau(x_0,x_1,x_2,y_3)\geq \tau(x_0,x_1,x_2,x_3(0))+\tau(x_0,x_1,x_2,y_3(0)).
\end{equation*}
At $\gamma=0$ we have $x_3(0)=y_3(0)$, and $d(x_1,x_3(0))=d(x_1,x_2)+d(x_2,x_3(0))$. Then
\begin{alignat*}{2}
 & \hspace{4mm} \tau(x_0,x_1,x_2,x_3(0))+\tau(x_0,x_1,x_2,y_3(0)) \\  &=2\tau(x_0,x_1,x_2,x_3(0)) \\
&=2(d(x_0,x_2)+d(x_1,x_3(0))-d(x_1,x_2)-d(x_0,x_3(0)) \\
&=2(d(x_0,x_2)+d(x_2,x_{3}(0))-d(x_0,x_3(0)))\geq 0.
\end{alignat*}
The conclusion follows.
\end{proof}
The main ingredient of the proof of Theorem \ref{catcom} is the case for $n=3$, which we prove now:
\begin{prop}\label{cato}Suppose that $X$ is a $\textup{CAT}(-1)$ space, and consider a good chain $x_0,x_1,x_2,x_3$ in $X$ with respective comparison chain $\overline{x}_0,\overline{x}_1,\ov{x}_2,\overline{x}_3$ in $\Hy^{2}$. Then 
\begin{equation*}
|\tau(x_0,x_1,x_2,x_3)|\leq \tau(\overline{x}_0,\overline{x}_1,\ov{x}_2,\overline{x}_3).
\end{equation*}
\end{prop}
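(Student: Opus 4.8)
The plan is to isolate the single distance that the chain and its comparison chain do not share. Since $\overline{\triangle(x_0,x_1,x_2)}=\triangle(\bar x_0,\bar x_1,\bar x_2)$ and $\overline{\triangle(x_1,x_2,x_3)}=\triangle(\bar x_1,\bar x_2,\bar x_3)$ are comparison triangles, all of $d(x_0,x_1),d(x_1,x_2),d(x_2,x_3),d(x_0,x_2),d(x_1,x_3)$ are preserved, and only $d(x_0,x_3)$ may differ from $d(\bar x_0,\bar x_3)$; a direct computation then gives
\[
\tau(\bar x_0,\bar x_1,\bar x_2,\bar x_3)-\tau(x_0,x_1,x_2,x_3)=d(x_0,x_3)-d(\bar x_0,\bar x_3),
\]
so $\tau(x_0,\dots,x_3)\le\tau(\bar x_0,\dots,\bar x_3)$ is equivalent to $d(\bar x_0,\bar x_3)\le d(x_0,x_3)$. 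For the reverse estimate, observe that $\bar x_0,\bar x_1,\bar x_2,\bar x_3$ is a \emph{good} convex chain, since it has the same consecutive distances and Gromov products as $x_0,\dots,x_3$; in particular $\tau(\bar x_0,\dots,\bar x_3)\ge 0$, and writing $\bar y_3$ for the reflection of $\bar x_3$ across the geodesic line through $\bar x_1,\bar x_2$, Corollary~\ref{cororef} yields $\tau(\bar x_0,\bar x_1,\bar x_2,\bar y_3)\ge-\tau(\bar x_0,\bar x_1,\bar x_2,\bar x_3)$. As reflection preserves $d(\bar x_1,\bar y_3)$ and $d(\bar x_2,\bar y_3)$, one has $\tau(\bar x_0,\bar x_1,\bar x_2,\bar y_3)=d(x_0,x_2)+d(x_1,x_3)-d(x_1,x_2)-d(\bar x_0,\bar y_3)$, so $\tau(x_0,\dots,x_3)\ge-\tau(\bar x_0,\dots,\bar x_3)$ will follow once $d(x_0,x_3)\le d(\bar x_0,\bar y_3)$ is known. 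Thus the proposition reduces to the two-sided estimate $d(\bar x_0,\bar x_3)\le d(x_0,x_3)\le d(\bar x_0,\bar y_3)$.

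For the right-hand inequality I would argue by developing. Lay out the two comparison triangles in $\Hy^2$ on opposite sides of the geodesic through $\bar x_1,\bar x_2$: the one for $\triangle(x_0,x_1,x_2)$ is $\triangle(\bar x_0,\bar x_1,\bar x_2)$ and the one for $\triangle(x_1,x_2,x_3)$ becomes $\triangle(\bar x_1,\bar x_2,\bar y_3)$. The $\CAT{-1}$ comparison maps of these triangles into $X$ agree along $\bar x_1\bar x_2$ and are each $1$-Lipschitz, so their common value is a $1$-Lipschitz map of the developed region onto $\triangle(x_0,x_1,x_2)\cup\triangle(x_1,x_2,x_3)$ sending $\bar x_0\mapsto x_0$ and $\bar y_3\mapsto x_3$. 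Taking $q$ on the segment $x_1x_2$ whose comparison point $\bar q$ is the intersection of $\bar x_0\bar y_3$ with $\bar x_1\bar x_2$ — which lies in that segment because the developed region $\bar x_0\bar x_1\bar y_3\bar x_2$ is convex, a consequence of $\bar x_0,\bar x_1,\bar x_2,\bar x_3$ being a good convex chain (Corollary~\ref{agu}) — and using that $\bar q$ is a comparison point in both triangles, the triangle inequality in $X$ and the $\CAT{-1}$ inequality give $d(x_0,x_3)\le d(x_0,q)+d(q,x_3)\le d(\bar x_0,\bar q)+d(\bar q,\bar y_3)=d(\bar x_0,\bar y_3)$.

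The left-hand inequality $d(\bar x_0,\bar x_3)\le d(x_0,x_3)$ is the essential point; in the Riemannian setting it is exactly the comparison estimate behind Epstein's and Granados' extensions of Schur's theorem, and it is here that negative curvature is genuinely used. I would establish it from the $\CAT{-1}$ law of cosines together with a variational argument in the spirit of Lemma~\ref{lemachoro} and Proposition~\ref{varcte}: the comparison angles dominate the Alexandrov angles ($\angle_{x_1}(x_0,x_2)\le\angle_{\bar x_1}(\bar x_0,\bar x_2)$, $\angle_{x_1}(x_2,x_3)\le\angle_{\bar x_1}(\bar x_2,\bar x_3)$, and similarly at $x_2$), and starting from the comparison chain one decreases these comparison angles at $\bar x_1$ and at $\bar x_2$ down to the true Alexandrov angles, invoking Lemma~\ref{lemachoro}-type monotonicity so that the distance between the extreme points of the chain does not increase, and finally comparing with $d(x_0,x_3)$ via the $\CAT{-1}$ law of cosines applied to $\triangle(x_0,x_1,x_3)$ and $\triangle(x_0,x_2,x_3)$. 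The main obstacle is the bookkeeping: a one-vertex angle decrease need not keep the chain good or convex, so these deformations have to be interleaved as in the proof of Proposition~\ref{goodpos}, and one must control the comparison angles at $\bar x_1$ and $\bar x_2$ simultaneously rather than one at a time — which is precisely why this $n=3$ case, and not the passage to general $n$ in Theorem~\ref{catcom}, is the heart of the matter.
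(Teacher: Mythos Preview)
Your reduction of the statement to the two-sided estimate $d(\bar x_0,\bar x_3)\le d(x_0,x_3)\le d(\bar x_0,\bar y_3)$ is the right starting point, and your argument for the upper bound (develop the two comparison triangles across $\bar x_1\bar x_2$, take the intersection point $\bar q=\bar x_0\bar y_3\cap\bar x_1\bar x_2$, and apply the $\CAT{-1}$ inequality twice) coincides with the paper's Case~1. The problem is your claim that this intersection always lies in the \emph{segment} $\bar x_1\bar x_2$. Corollary~\ref{agu} gives $\angle_{\bar x_2}(\bar x_0,\bar x_1)\le\pi/2$ and $\angle_{\bar x_1}(\bar x_2,\bar x_3)\le\pi/2$, but it says nothing about the interior angles $\angle_{\bar x_1}(\bar x_0,\bar x_2)$ and $\angle_{\bar x_2}(\bar x_1,\bar x_3)$ of the good chain, which can be arbitrarily close to $\pi$; hence the developed quadrilateral $\bar x_0\bar x_1\bar y_3\bar x_2$ need not be convex. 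The paper isolates this as a genuine second case and does \emph{not} attempt to prove $d(x_0,x_3)\le d(\bar x_0,\bar y_3)$ there. Instead it uses the cruder triangle inequality $d(x_0,x_3)\le d(x_0,x_2)+d(x_2,x_3)$ and establishes the purely planar inequality $f+2b+c\le d+2e$ (in the paper's notation) by a variational argument in the spirit of Lemma~\ref{crece}, checking it at the two endpoints $\gamma=0$ and $\gamma=u$ and showing the derivative $f_\gamma+c_\gamma$ keeps a constant sign.

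For the lower bound $d(\bar x_0,\bar x_3)\le d(x_0,x_3)$ your sketch is vaguer than it needs to be, and the deformation strategy you propose runs into exactly the bookkeeping difficulties you anticipate. The paper's argument is much more direct and worth knowing: let $\bar P=\bar x_0\bar x_2\cap\bar x_1\bar x_3$ be the intersection of the diagonals of the convex comparison chain, and let $P\in x_0x_2$ be the point with comparison point $\bar P$ in $\triangle(x_0,x_1,x_2)$. The $\CAT{-1}$ inequality gives $d(P,x_1)\le d(\bar P,\bar x_1)$, whence $d(P,x_3)\ge d(x_1,x_3)-d(P,x_1)\ge d(\bar P,\bar x_3)$. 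One then checks, using Corollary~\ref{agu} to control the relevant angles, that the law-of-cosines function $L$ is monotone in the right directions to conclude first $\angle_P(x_0,x_3)\ge\angle_{\bar P}(\bar x_0,\bar x_3)\ge\pi/2$ and then $d(x_0,x_3)\ge L(d(P,x_0),d(P,x_3),\angle_P(x_0,x_3))\ge d(\bar x_0,\bar x_3)$. No deformation is needed; the whole argument is a single comparison at the diagonal crossing.
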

\begin{proof}
The inequality  $\tau(x_0,x_1,x_2,x_3)\leq \tau(\overline{x}_0,\overline{x}_1,\overline{x}_2,\overline{x}_3)$ turns out to be equivalent to $d(x_0,x_3)\geq d(\overline{x}_0,\overline{x}_3)$, so we will prove it first. Let $\ov{P}=\ov{x}_0\ov{x}_2\cap\ov{x}_1\ov{x}_3$, and consider the point $P\in x_0x_2$ such that $\ov{P}$ is the comparison point for $P$ in $\triangle(x_0,x_1,x_2)$. The $\textup{CAT}(-1)$ inequality implies $d(P,x_1)\leq d(\ov{P},\ov{x}_1)$, and hence $d(P,x_3)\geq d(x_1,x_3)-d(P,x_1)\geq d(\ov{x}_1,\ov{x}_3)-d(\ov{P},\ov{x}_1)=d(\ov{P},\ov{x}_3)$. In addition, since $\overline{x}_0,\overline{x}_1,\overline{x}_2,\overline{x}_3$ is a convex good chain in $\Hy^2$, Corollary \ref{agu} implies
\begin{equation*}
    \frac{\sinh(d(\ov{P},\ov{x}_3))}{\sinh((d(\ov{P},\ov{x}_2))}=\frac{\sin(\angle_{\ov{x}_2}(\ov{x}_0,\ov{x}_3))}{\sin(\angle_{\ov{x}_3}(\ov{x}_1,\ov{x}_2))}\geq \frac{\sin(\angle_{\ov{x}_2}(\ov{x}_0,\ov{x}_3))}{\sin(\angle_{\ov{x}_3}(\ov{x}_0,\ov{x}_2))}=\frac{\sinh((d(\ov{x}_0,\ov{x}_3))}{\sinh((d(\ov{x}_0,\ov{x}_2))}\geq 1,
\end{equation*} and we have $d(P,x_3) \geq d(\ov{P},\ov{x}_3) \geq d(\ov{P},\ov{x}_2) $. The monotonicity properties of $L$ imply 
\begin{alignat*}{2}
L(d(\ov{P},\ov{x}_2),d(\ov{P},\ov{x}_3),\angle_{\ov{P}}(\ov{x}_2,\ov{x}_3)) &=d(\ov{x}_2,\ov{x}_3)=d(x_2,x_3)\\
&\geq L(d(P,x_2),d(P,x_3),\angle_{P}(x_2,x_3))\\
&\geq L(d(\ov{P},\ov{x}_2),d(\ov{P},\ov{x}_3),\angle_{P}(x_2,x_3)),
\end{alignat*}
so $\angle_{\ov{P}}(\ov{x}_2,\ov{x}_3)\geq \angle_{P}(x_2,x_3)$, and since $\angle_{\ov{P}}(\ov{x}_2,\ov{x}_3)\leq \pi/2$ we obtain $\angle_{P}(x_0,x_3)\geq \angle_{P}(x_0,x_2)-\angle_{P}(x_2,x_3)\geq \angle_{\ov{P}}(\ov{x}_0,\ov{x}_2)-\angle_{\ov{P}}(\ov{x}_2,\ov{x}_3)=\angle_{\ov{P}}(\ov{x}_0,\ov{x}_3) \geq \pi/2$.
Therefore
\begin{alignat*}{2}
d(x_0,x_3)&\geq L(d(P,x_0),d(P,x_3),\angle_{P}(x_0,x_3))\\
&\geq L(d(\ov{P},\ov{x}_0),d(\ov{P},\ov{x}_3),\angle_{P}(x_0,x_3))\\
&\geq L(d(\ov{P},\ov{x}_0),d(\ov{P},\ov{x}_3),\angle_{\ov{P}}(\ov{x}_0,\ov{x}_3))=d(\ov{x}_0,\ov{x}_3).
\end{alignat*}
\begin{figure}[hbt]
\begin{tikzpicture}[scale=0.43]
\draw (2.5,7.5) node {$\Hy^2$};
\draw (2,5) -- (6,3);
\draw (2,5) -- (11,4.5);
\draw (2,5) -- (12.5,8);
\draw (6,3) -- (11,4.5);
\draw (6,3) -- (12.5,8);
\draw (11,4.5) -- (12.5,8);
\draw (2,5) node[left] {\small{$\overline{x}_{0}$}};
\draw (2,5) node{\small{$\bullet$}};
\draw (6,3) node[below] {\small{$\overline{x}_{1}$}};
\draw (6,3) node{\small{$\bullet$}};
\draw (11,4.5) node[below]{\small{$\overline{x}_{2}$}};
\draw (11,4.5) node{\small{$\bullet$}};
\draw (12.5,8) node[right] {\small{$\overline{x}_{3}$}};
\draw (12.5,8) node{\small{$\bullet$}};
\draw (7.905047,4.783085) node[below right] {\small{$\overline{P}$}};
\draw (8.145047,4.683085) node{\small{$\bullet$}};
\draw (-12.5,7.5) node {$X$};
\draw (-13.5,5) -- (-4,4.5);
\draw (-6.387062,4.633085) -- (-1,8);
\draw (-9,3) -- (-4,4.5);
\draw (-9,3) arc (54.91717434:77.69820359:12.5380) ;
\draw (-1,8) arc (129.86006949:149.19324577:13.7394) ;
\draw (-13.5,5) arc (-86.14661272:-66.84809096:38.454849);
\draw (-13.5,5) node[left] {\small{$x_{0}$}};
\draw (-13.5,5) node{\small{$\bullet$}};
\draw (-9,3) node[below] {\small{$x_{1}$}};
\draw (-9,3) node{\small{$\bullet$}};
\draw (-4,4.5) node[below]{\small{$x_{2}$}};
\draw (-4,4.5) node{\small{$\bullet$}};
\draw (-1,8) node[right] {\small{$x_{3}$}};
\draw (-1,8) node{\small{$\bullet$}};
\draw (-6.387062,4.633085) node[below left] {\small{$P$}};
\draw (-6.387062,4.633085) node{\small{$\bullet$}};
\end{tikzpicture}
\caption{Proof of $\tau(x_0,x_1,x_2,x_3)\leq \tau(\overline{x}_0,\overline{x}_1,\overline{x}_2,\overline{x}_3)$.}
\end{figure}

The second inequality is $\tau(x_0,x_1,x_2,x_3)\geq -\tau(\overline{x}_0,\overline{x}_1,\overline{x}_2,\overline{x}_3)$. For this one, let $\ov{y}_3$ be the reflection of $\ov{x}_3$ with respect to $\ov{x}_1\ov{x}_2$. We separate into two cases:

Case 1: The chain $\ov{x}_0,\ov{x}_1,\ov{y}_3,\ov{x}_2$ is convex.

Let $\ov{P}=\ov{x}_1\ov{x}_2\cap\ov{x}_0\ov{y}_3$ and consider the point $P\in x_1x_2$ so that $\ov{P}$ is the comparison point of $P$ in $\ov{x}_1\ov{x}_2$. The $\textup{CAT}(-1)$ inequality applied to $\triangle(x_0,x_1,x_2)$ implies $d(P,x_0)\leq d(\ov{P},\ov{x}_0)$, and similarly $d(P,x_3)\leq d(\ov{P},\ov{x}_3)$, obtaining $d(x_0,x_3)\leq d(x_0,P)+d(P,x_3)\leq d(\ov{P},\ov{x}_0)+d(\ov{P},\ov{x}_3)=d(\ov{x}_0,\ov{y}_3)$. This last inequality is equivalent to $\tau(x_0,x_1,x_2,x_3)\geq \tau(\ov{x}_0,\ov{x}_1,\ov{x}_2,\ov{y}_3)$ and by Corollary \ref{cororef} we obtain 
$\tau(x_0,x_1,x_2,x_3)\geq-\tau(\ov{x}_0,\ov{x}_1,\ov{x}_2,\ov{x}_3)$.

\begin{figure}[hbt]
\begin{tikzpicture}[scale=0.4]
\draw (2.5,7.5) node {$\Hy^2$};
\draw (2,5) -- (6,3);
\draw (2,5) -- (11,4.5);
\draw (2,5) -- (12.5,8);
\draw (6,3) -- (11,4.5);
\draw (6,3) -- (14.12408,2.266972);
\draw (11,4.5) -- (14.12408,2.266972);
\draw (2,5) -- (14.12408,2.266972);
\draw (11,4.5) -- (12.5,8);
\draw (2,5) node[left] {\small{$\overline{x}_{0}$}};
\draw ((2,5) node{\small{$\bullet$}};
\draw (6,3) node[below] {\small{$\overline{x}_{1}$}};
\draw (6,3) node{\small{$\bullet$}};
\draw (11,4.5) node[below]{\small{$\overline{x}_{2}$}};
\draw (11,4.5) node{\small{$\bullet$}};
\draw (12.5,8) node[right] {\small{$\overline{x}_{3}$}};
\draw (12.5,8) node{\small{$\bullet$}};
\draw (14.12408,2.266972) node[right] {\small{$\overline{y}_{3}$}};
\draw (14.12408,2.266972) node{\small{$\bullet$}};
\draw (7.905047,4.783085) node[below right] {\small{$\overline{P}$}};
\draw (8.115047,3.633085) node{\small{$\bullet$}};
\draw (-12.5,7.5) node {$X$};
\draw (-13.5,5) -- (-6.91526,3.620630);
\draw (-1,8) -- (-6.91526,3.620630);
\draw (-13.5,5) -- (-4,4.5);
\draw (-9,3) -- (-4,4.5);
\draw (-9,3) arc (54.91717434:77.69820359:12.5380) ;
\draw (-1,8) arc (129.86006949:149.19324577:13.7394) ;
\draw (-13.5,5) arc (-86.14661272:-66.84809096:38.454849);
\draw (-13.5,5) node[left] {\small{$x_{0}$}};
\draw (-13.5,5) node{\small{$\bullet$}};
\draw (-9,3) node[below] {\small{$x_{1}$}};
\draw (-9,3) node{\small{$\bullet$}};
\draw (-4,4.5) node[below]{\small{$x_{2}$}};
\draw (-4,4.5) node{\small{$\bullet$}};
\draw (-1,8) node[right] {\small{$x_{3}$}};
\draw (-1,8) node{\small{$\bullet$}};
\draw (-6.387062,4.633085) node[below left] {\small{$P$}};
\draw (-6.927062,3.633085) node{\small{$\bullet$}};
\end{tikzpicture}
\caption{Proof of $\tau(x_0,x_1,x_2,x_3)\geq -\tau(\overline{x}_0,\overline{x}_1,\overline{x}_2,\overline{x}_3)$: Case 1.}
\end{figure}
Case 2: The chain $\ov{x}_0,\ov{x}_1,\ov{y}_3,\ov{x}_2$ is not convex.

W.l.o.g. suppose that $\ov{x}_2$ is an interior point of convex hull of $\ov{x}_0,\ov{x}_1,\ov{y}_3$, and consider $\ov{x}_3$ and $\ov{y}_3$ as points depending on $\gamma=\angle_{\ov{x}_1}(\ov{x}_2,\ov{x}_3)$ as in Lemma \ref{crece}. Let $\beta=\angle_{\ov{x}_1}(\ov{x}_0,\ov{x}_2)$, $a=d(\ov{x}_0,\ov{x}_1)$, $b=d(\ov{x}_1,\ov{x}_2)$, $c=c(\gamma)=d(\ov{x}_2,\ov{x}_3(\gamma))=d(\ov{x}_2,\ov{y}_3(\gamma))$, $d=d(\ov{x}_0,\ov{x}_2)$, $e=d(\ov{x}_1,\ov{x}_3(\gamma))=d(\ov{x}_1,\ov{y}_3(\gamma))$, $f=f(\gamma)=d(\ov{x}_0,\ov{x}_3(\gamma))$, and $g=g(\gamma)=d(\ov{x}_0,\ov{y}_3(\gamma))$.
Also, let $0<u<\pi$ be the angle so that $\angle_{\ov{x}_2}(\ov{x}_0,\ov{y}_3(u))=\pi$. By triangle inequality, $d(x_0,x_3)\leq d(x_0,x_2)+d(x_2,x_3)= d+c$, and it is enough to prove that $$f+2b+c\leq d+2e, \text{ for all }0\leq \gamma \leq u.$$ To do this, note first that at $\gamma=0$ we have $d+2e-(f(0)+2b+c(0))=d+2(b+c(0))-(f(0)+2b+c(0))=d+c(0)-f(0)\geq 0$. In addition, at $\gamma=u$ we have
$d+2e-(f(u)+2b+c(u))=(d+e-f(u)-b)+(d+e-(d+c(u))-b)=\tau(\ov{x}_0,\ov{x}_1,\ov{x}_2,\ov{x}_3(u))+\tau(\ov{x}_0,\ov{x}_1,\ov{x}_2,\ov{y}_3(u))$, which is nonnegative by Corollary \ref{cororef}.

The conclusion follows if we prove that the derivative of the map $\gamma \mapsto f+2b+c-d+2e$ (which is $f_{\gamma}+c_{\gamma}$) does not change sign on the interval $(0,u)$. But, similarly to the computations made in the proof of Corollary \ref{crece}, we obtain 
\begin{equation*}
f_{\gamma}\sinh(f)=-\sinh(a)\sinh(e)\sin(\beta-\gamma), \hspace{3mm} c_{\gamma}\sinh(c)=\sinh(b)\sinh(e)\sin(\gamma),
\end{equation*}
and by the law of sines \eqref{lsin},
\begin{alignat*}{2}
f_{\gamma}+c_{\gamma} & =\sinh(e)\left(\frac{\sinh(b)\sin(\gamma)}{\sinh(c)}-\frac{\sinh(a)\sin(\beta-\gamma)}{\sinh(f)}\right)\\
& = \sinh(e)(\sin(\angle_{\ov{x}_3}(\ov{x}_1,\ov{x}_2)-\sin(\angle_{\ov{x}_3}(\ov{x}_0,\ov{x}_1))))\\
& = \sinh(e)(\sin(\angle_{\ov{y}_3}(\ov{x}_1,\ov{x}_2)-\sin(\angle_{\ov{x}_3}(\ov{x}_0,\ov{x}_1)))). 
\end{alignat*}
By \eqref{lsin} we also have
\begin{equation*}
\frac{\sin(\angle_{\ov{y}_3}(\ov{x}_1,\ov{x}_2))}{\sinh(b)}=\frac{\sin(\angle_{\ov{x}_2}(\ov{x}_1,\ov{y}_3))}{\sinh(e)}\leq \frac{\sin(\angle_{\ov{x}_2}(\ov{x}_1,\ov{y}_3(u)))}{\sinh(e)}=\frac{\sin(\angle_{\ov{y}_3(u)}(\ov{x}_1,\ov{x}_2))}{\sinh(b)},
\end{equation*}
and 
\begin{equation*}
\frac{\sin(\angle_{\ov{y}_3(u)}(\ov{x}_1,\ov{x}_2))}{\sinh(a)}=\frac{\sin(\angle_{\ov{x}_0}(\ov{x}_1,\ov{y}_3(u)))}{\sinh(e)}\leq \frac{\sin(\angle_{\ov{x}_0}(\ov{x}_1,\ov{x}_3))}{\sinh(e)}=\frac{\sin(\angle_{\ov{x}_3}(\ov{x}_0,\ov{x}_1))}{\sinh(a)},
\end{equation*}
where in the first inequality we used $\pi\geq\angle_{\ov{x}_2}(\ov{x}_1,\ov{y}_3)\geq\angle_{\ov{x}_2}(\ov{x}_1,\ov{y}_3(u))=\pi-\angle_{\ov{x}_2}(\ov{x}_0,\ov{x}_1)\geq \pi/2$, and in the second inequality we used $0\leq \angle_{\ov{x}_0}(\ov{x}_1,\ov{y}_3(u))\leq\angle_{\ov{x}_0}(\ov{x}_1,\ov{x}_3) \leq \pi/2$.
We conclude $\sin(\angle_{\ov{x}_3}(\ov{x}_0,\ov{x}_1))\geq \sin(\angle_{\ov{y}_3}(\ov{x}_1,\ov{x}_2)$, implying $f_{\gamma}+c_{\gamma}\leq 0$ for all $0<\gamma<u$ and completing the proof of the proposition. \end{proof}

\begin{figure}[hbt]
\begin{tikzpicture}[scale=0.65]
\draw (2.2408333333,6.6291666666) -- (6,3);
\draw (2.2408333333,6.6291666666) -- (10.985,4.46);
\draw (2.2408333333,6.6291666666) -- (14.760833333,7.129166666);
\draw (2.2408333333,6.6291666666) -- (15.604208,4.249561);
\draw (6,3) -- (10.985,4.46);
\draw (6,3) -- (14.760833333,7.129166666);
\draw (6,3) -- (15.604208,4.249561);
\draw (10.985,4.46) -- (14.760833333,7.129166666);
\draw (10.985,4.46) -- (15.604208,4.249561);
\draw[dashed] (10.985,4.46) -- (14.310313,7.974022);
\draw[dashed] (10.985,4.46) -- (15.680656,3.295147);
\draw[dashed] (10.985,4.46) -- (18.136285,6.554458);
\draw (2.2408333333,6.6291666666) node[left] {\small{$\overline{x}_{0}$}};
\draw (2.2408333333,6.6291666666) node{\small{$\bullet$}};
\draw (6,3) node[below] {\small{$\overline{x}_{1}$}};
\draw (6,3) node{\small{$\bullet$}};
\draw (10.985,4.46) node[below]{\small{$\overline{x}_{2}$}};
\draw (10.985,4.46) node{\small{$\bullet$}};
\draw (14.760833333,7.129166666) node[right] {\small{$\overline{x}_{3}(\gamma)$}};
\draw (14.760833333,7.129166666) node{\small{$\bullet$}};
\draw (15.604208,4.249561) node[right] {\small{$\overline{y}_{3}(\gamma)$}};
\draw (15.604208,4.249561) node{\small{$\bullet$}};
\draw (14.310313,7.974022) node[right] {\small{$\overline{x}_{3}(u)$}};
\draw (15.680656,3.295147) node[right] {\small{$\overline{y}_{3}(u)$}};
\draw[dashed] (15.677194,2.607417) arc (-2.323088:33.237110:9.685154) ;
\draw (9.585353375,4.0500734) arc (16.32418:25.235496:3.735962) ;
\draw (8.8,4.2) node[right] {\small{$\gamma$}};
\end{tikzpicture}
\caption{Proof of $\tau(x_0,x_1,x_2,x_3)\geq -\tau(\overline{x}_0,\overline{x}_1,\overline{x}_2,\overline{x}_3)$: Case 2. }
\end{figure}

\begin{proof}[Proof of Theorem \ref{catcom}] We will use induction on $n$. The case for $n=3$ is Proposition \ref{cato}, so suppose $n\geq 4$, and assume that the result holds for all good chains of length less than $n$. Let $x_0,\dots,x_n \in X$ be a good chain with comparison chain $\ov{x}_0,\dots,\ov{x}_n \in \Hy^{2}$. 

Since we have the decomposition $$\tau(x_0,\dots,x_n)=\tau(x_0,x_1,x_2,x_3)+\tau(x_0,x_2,x_3,\dots,x_n),$$ it is enough to show that 
$|\tau(x_0,x_2,x_3,\dots,x_n)|\leq \tau(\ov{x}_0,\ov{x}_2,\ov{x}_3,\dots,\ov{x}_n)$. To do this, let $y_0,y_2,y_3,\dots,y_n \in \Hy^{2}$ be the comparison chain for $x_0,x_2,x_3,\dots,x_n$. The chain $x_0,x_2,x_3,\dots,x_n$ is good by Corollary \ref{congood}, so our inductive assumption implies $|\tau(x_0,x_2,x_3,\dots,x_n)|\leq \tau(y_0,y_2,y_3,\dots,y_n)$. In addition we have $d(y_2,y_0)=d(\ov{x}_2,\ov{x}_0)$ and $\groprod{y_{j-1}}{y_{j+1}}{y_j}=\groprod{\ov{x}_j}{\ov{x}_{j-1}}{\ov{x}_{j+1}}$ for $3\leq j\leq n-1$, implying $d(y_{j},y_{j-1})=d(\ov{x}_{j},\ov{x}_{j-1})$ for $3\leq j\leq n$ and $\angle_{y_{j}}(y_{j+1},y_{j-1})=\angle_{\ov{x}_{j}}(\ov{x}_{j+1},\ov{x}_{j-1})$ for $3\leq j\leq n-1$. But we also have $\tau(x_0,x_1,x_2,x_3)\leq \tau(\ov{x}_0,\ov{x}_1,\ov{x}_2,\ov{x}_3 )$, which means $d(\ov{x}_0,\ov{x}_3)\leq d(x_0,x_3)=d(y_0,y_3)$, and hence $\angle_{y_{2}}(y_{3},y_{0})\geq\angle_{\ov{x}_2}(\ov{x}_{3},\ov{x}_{0})$. In this case Lemma \ref{lemachoro} implies $\tau(y_0,y_2,y_3,\dots,y_n)\leq \tau(\ov{x}_0,\ov{x}_2,\ov{x}_3,\dots,\ov{x}_n)$, concluding the proof of the theorem.
\end{proof}

\medskip

\hspace{-4.3mm}\textbf{Acknowledgment}\hspace{2mm} I thank Jairo Bochi for very interesting and valuable discussions and corrections. I was partially supported by CONICYT PIA ACT172001 during the preparation of this article.

\small{Eduardo Oreg\'on-Reyes (\texttt{eoregon@berkeley.edu})}\\
\small{Department of Mathematics}\\
\small{University of California at Berkeley}\\
\small{850 Evans Hall, Berkeley, CA 94720-3860, U.S.A.}

\end{document}